\documentclass[11pt]{amsart}

\usepackage[T1]{fontenc}
\usepackage[utf8]{inputenc}
\usepackage{lmodern}
\usepackage{amsmath,amssymb,amsthm,mathtools}
\usepackage[a4paper,margin=1.15in]{geometry}
\usepackage{microtype}
\usepackage{enumitem}
\usepackage[colorlinks=true,linkcolor=blue,citecolor=blue,urlcolor=blue]{hyperref}

\newtheorem{theorem}{Theorem}[section]
\newtheorem{lemma}[theorem]{Lemma}
\newtheorem{corollary}[theorem]{Corollary}
\theoremstyle{definition}
\newtheorem{definition}[theorem]{Definition}
\newtheorem{remark}[theorem]{Remark}
\numberwithin{equation}{section}

\newcommand{\N}{\mathbb N}
\newcommand{\D}{\mathbb D}
\newcommand{\Cyl}{\operatorname{Cyl}}
\newcommand{\rng}{\operatorname{rng}}
\newcommand{\bfo}{\mathrm{bfo}}
\newcommand{\fo}{\mathrm{fo}}
\newcommand{\join}{\mathbin{\oplus}}

\title[Dense One-One Degrees in a Bounded Finite-One Degree]
{A Bounded Finite-One Degree Whose One-One Degrees\\
Form Exactly a Dense Linear Order}
\author{Patrizio Cintioli}
\address{Mathematics Division, School of Science and Technology, University of Camerino, Italy}
\email{patrizio.cintioli@unicam.it}

\subjclass[2020]{Primary 03D30; Secondary 06A05}

\keywords{one-one reducibility, finite-one reducibility,
bounded finite-one reducibility, degree structures, dense linear orders}

\begin{document}

\begin{abstract}
We construct a set $U\leq_T\emptyset''$ whose bounded finite-one degree,
ordered internally by one-one reducibility, consists exactly of a
countable dense linear order without endpoints. More precisely,

\[
  \left(
    \{[B]_1:B\equiv_{\bfo}U\},\leq_1
  \right)
  \cong
  (\mathbb Q,\leq).
\]

This exact realization contrasts with two previous results. In earlier
work, a bounded finite-one degree was constructed that contains a copy
of $(\mathbb Q,\leq)$, but the same degree also contains an infinite
antichain of one-one degrees and, more generally, embedded copies of
all countable partial orders; thus the dense chain does not exhaust the
degree. In a different direction, $m$-rigidity yields an almost-sure
and comeager obstruction: for a measure-$1$ and comeager class of sets,
the corresponding bounded finite-one degree contains an infinite
antichain of one-one degrees and hence is not linearly ordered.
The present construction shows that, despite this typical negative
behaviour, exact dense linear order can occur. In particular, it gives
an affirmative answer to Open Question~3 of Richter, Stephan, and Zhang.

The proof has two main parts. A block homogenization construction
produces a noncylindrical set $U$, a weak dyadic tower $(U_e)_{e\in\N}$,
computable reservoirs of both colours, and a base absorption property.
An abstract absorption-to-exhaustivity theorem then shows that every
member of the bounded finite-one degree of $U$ is one-one equivalent to
some finite autojoin $mU_e$, thereby yielding the exhaustive
classification above.
\end{abstract}

\maketitle

\section{Introduction}\label{sec:introduction}

We study the one-one degrees contained in a single bounded finite-one
degree. A bounded finite-one reduction is a total computable many-one
reduction whose fibres have a common finite bound. A bounded finite-one
degree may therefore split into several one-one degrees, and one may ask
which partially ordered structures arise when these degrees are ordered
by one-one reducibility.

Richter, Stephan, and Zhang \cite[Section~4]{RSZ2026} constructed a
nonrecursive bounded finite-one degree whose one-one degrees form exactly
a strictly ascending chain of order type $\omega$. Their work thus shows
that the full internal structure can be linearly ordered, rather than
merely contain a linearly ordered subfamily. In the conclusion of their
paper, they ask whether a dense linear order can also occur in this exact
sense \cite[Section~5, Open Question~3]{RSZ2026}:
\begin{quote}
\emph{Are there bounded finite-one degrees consisting exactly of a dense
linearly ordered set of one-one degrees?}
\end{quote}

Two previous results put this question into perspective. The first gives
a strong obstruction for typical sets. Recall that a set $A$ is
\emph{$m$-rigid} if every total computable many-one self-reduction of $A$
is eventually the identity. In \cite[Section~6]{CintioliRigidity}, it was
shown that the bounded finite-one degree of every $m$-rigid set contains
an infinite antichain of one-one degrees. Since the class of $m$-rigid
sets has Lebesgue measure $1$ and is comeager in Cantor space, it follows
that, for almost every set $A$, and for a comeager class of sets $A$, the
one-one degrees within $[A]_{\bfo}$ are not linearly ordered at all.
Richter, Stephan, and Zhang also observed the infinite-antichain
phenomenon for Martin-L\"of random sets \cite[Section~4]{RSZ2026}; the
$m$-rigidity argument extends it to a class that is also comeager.
Thus the configuration sought in Open Question~3 is ruled out both
almost everywhere and on a comeager class of sets.

The second result shows that density can nevertheless be realized as an
embedded suborder. In \cite{CintioliProfiles}, a block-density profile
method produced a noncomputable set $A\leq_T\emptyset'$ whose bounded
finite-one degree contains a copy of $(\mathbb Q,\leq)$. The same method
also yielded a single bounded finite-one degree containing both such a
dense chain and an infinite antichain, and, more generally, a degree into
which every countable partial order embeds. In particular, the degree
constructed there is not itself linearly ordered: the dense chain does
not exhaust its one-one degrees. These results establish the presence
of dense suborders, but do not settle the exact-realization problem.
Nor do they imply that every bounded finite-one degree containing a
dense chain must contain incomparable one-one degrees.

The present paper resolves this distinction by giving an affirmative
answer to Open Question~3. We construct a bounded finite-one degree in
which the dense chain exhausts the entire internal one-one degree
structure.

\medskip
\noindent
\textbf{Main Theorem (Corollary~\ref{cor:main}).}
\emph{There exists a noncomputable set $U\leq_T\emptyset''$ such that}
\[
  \left(
    \{[B]_1:B\equiv_{\bfo}U\},\leq_1
  \right)
  \cong
  (\D_{>0},\leq)
  \cong
  (\mathbb Q,\leq),
\]
\emph{where
$\D_{>0}=\{m/2^n:m\geq1,\ n\in\N\}$
is the set of positive dyadic rationals. In particular, the one-one
degrees in $[U]_{\bfo}$ form a countable dense linear order without
endpoints.}
\medskip

This affirmative existence result contrasts with the almost-sure and
comeager failure of the required configuration. Combining the theorem
with \cite[Section~6]{CintioliRigidity}, the class of sets whose bounded
finite-one degree has internal one-one order type $(\mathbb Q,\leq)$ is
nonempty, but null and meager. Here measure and category refer to sets
$A\in2^{\N}$ representing the degrees, not to a measure or topology
assumed on the quotient space of degrees.

More precisely, our construction supplies a sequence $U_0=U,U_1,\ldots$
with $U_n\equiv_1 2U_{n+1}$, where $mA$ denotes the join of $m$ copies of
$A$. Every set $B\equiv_{\bfo}U$ is one-one equivalent to some $mU_n$,
and the comparison relation is given exactly by
\[
  mU_n\leq_1 kU_\ell
  \quad\Longleftrightarrow\quad
  \frac{m}{2^n}\leq\frac{k}{2^\ell}.
\]
Thus positive dyadic multiplicities provide a classification of all the
one-one degrees in $[U]_{\bfo}$, not merely parameters for a selected
subfamily.

The proof is divided into two main parts. The Block Homogenization
Construction, Theorem~\ref{thm:block-homogenization}, builds a
noncylindrical set $U$ together with the sequence $(U_n)$, called a
\emph{weak dyadic tower}, and infinite computable subsets of both $U_n$
and its complement at each level. The construction uses nested
computable partitions into finite blocks. Homogenizing the number of
points from the range of a computable injection that fall into selected
blocks yields the \emph{base absorption property}
\[
  X\leq_1 U
  \quad\Longrightarrow\quad
  U\join X\equiv_1 mU_n
  \quad\text{for some }m\geq1\text{ and }n\in\N.
\]

The Absorption-to-Exhaustivity Theorem,
Theorem~\ref{thm:absorption-exhaustivity}, then proves abstractly that
these properties force every member of $[U]_{\bfo}$ to have the required
form. It combines a decomposition of bounded reductions into finitely
many injective pieces with a finite-one cancellation lemma:
\[
  P\join Z\leq_1 Q\join Z
  \quad\text{and}\quad
  Z\leq_{\fo}Q
  \quad\Longrightarrow\quad
  P\leq_1Q.
\]
Cancellation allows absorption to propagate along the tower and
ultimately removes the common summand from the comparison that
identifies an arbitrary $B\equiv_{\bfo}U$ with a positive dyadic
representative. This passage from absorption to an exhaustive
classification is what excludes additional one-one degrees outside
the dense chain.

Although $\emptyset''$ is used to select the construction data, every
reduction in the proof is an ordinary computable function. No computable
uniform family of reductions witnessing the tower equivalences is
assumed: each individual reduction uses only finitely many fixed levels.
The separation between the construction and the abstract classification
makes explicit the two tasks involved in exact realization: producing
the dyadic family and proving that it exhausts the degree.

\section{Preliminaries}

We use standard terminology and notation from computability theory.
General references are Rogers~\cite{Rogers1987},
Soare~\cite{Soare1987}, and the two volumes of
Odifreddi~\cite{Odifreddi1989,Odifreddi1999}.

All sets are subsets of $\N=\{0,1,2,\ldots\}$ and are identified with
their characteristic functions. We write
$\overline A=\N\setminus A$ and refer to the values $0$ and $1$ of a
characteristic function as the two membership colours. We write
$\rng(f)$ for the range of a function $f$.

We use ``c.e.'' for computably enumerable. Turing reducibility is
denoted by $\leq_T$, and $\emptyset'$ and $\emptyset''$ denote the
first and second Turing jumps of the empty set. The notation
$\Sigma^0_n$ and $\Pi^0_n$ refers to the usual arithmetical hierarchy.
Unless explicitly stated otherwise, all computability notions and all
reductions are unrelativized.

A total computable function $f$ is a many-one reduction from $A$ to $B$
if

\[
  x\in A \iff f(x)\in B
\]

for every $x$. It is a \emph{one-one reduction} if it is injective, a
\emph{finite-one reduction} if every fibre $f^{-1}(y)$ is finite, and a
\emph{bounded finite-one reduction} if there is a constant $k\geq1$,
possibly depending on $f$ but independent of $y$, such that

\[
  |f^{-1}(y)|\leq k
\]

for every $y$.

We write $\leq_1$, $\leq_{\fo}$ and $\leq_{\bfo}$ for the
corresponding reducibilities, and use $\equiv_1$, $\equiv_{\fo}$ and
$\equiv_{\bfo}$ for the induced equivalence relations. For
$\rho\in\{1,\fo,\bfo\}$, let
\[
  [A]_\rho=\{B\subseteq\N:B\equiv_\rho A\}.
\]
The equivalence classes induced by $\equiv_1$, $\equiv_{\fo}$ and
$\equiv_{\bfo}$ are called one-one degrees, finite-one degrees and
bounded finite-one degrees, respectively.

For every $m\geq1$, fix a computable bijection
\[
  \langle\cdot,\cdot\rangle_m:
  \{0,\ldots,m-1\}\times\N\longrightarrow\N
\]
with computable inverse. We suppress the subscript $m$ whenever
the relevant finite product is clear from the context. We also fix
a computable bijection $\N\times\N\to\N$ for the definition of
cylinders.

For $m\geq1$, the $m$-fold autojoin of $A$ is
\[
  mA=\{\langle i,x\rangle:i<m\ \&\ x\in A\}.
\]

The binary join is
\[
  A\join B
  =
  \{\langle0,x\rangle_2:x\in A\}
  \cup
  \{\langle1,x\rangle_2:x\in B\}.
\]
Up to a computable
permutation,
\[
  mA\join nA=(m+n)A.
\]
The cylinder over $A$ is
\[
  \Cyl(A)=\{\langle x,t\rangle:x\in A\ \&\ t\in\N\}.
\]
A set $A$ is a \emph{cylinder} if $A\equiv_1\Cyl(A)$.

We next introduce computable reservoirs, which allow computable
join summands to be absorbed without changing the one-one degree.
The idea is to map each reservoir injectively into part of itself,
leaving infinitely many unused target positions of each membership
colour for the additional summand.
Lemma~\ref{lem:reservoir-absorption} makes this precise and also
shows that the reservoir property is preserved under finite autojoins.

\begin{definition}\label{def:reservoir}
A set $A$ has \emph{computable reservoirs of both colours} if there are
computable infinite sets
\[
  R_1\subseteq A
  \qquad\text{and}\qquad
  R_0\subseteq\overline A.
\]
\end{definition}

\begin{lemma}[Reservoir absorption]\label{lem:reservoir-absorption}
Suppose that $A$ has computable reservoirs of both colours. Then:
\begin{enumerate}[label=\textup{(\roman*)}]
\item for every computable set $C$,
      \[
        A\join C\equiv_1 A;
      \]
\item $mA$ has computable reservoirs of both colours for every $m\geq 1$.
\end{enumerate}
\end{lemma}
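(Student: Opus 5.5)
For part (i), the reduction $A\leq_1 A\join C$ is immediate: send $x$ to $\langle0,x\rangle$. The substance is the converse $A\join C\leq_1 A$. I would build a computable injection $h\colon\N\to\N$ that is a one-one self-reduction of $A$ and whose range misses infinitely many elements of $R_1$ and infinitely many elements of $R_0$, computably. Then I would send $\langle1,y\rangle$ injectively into those unused reservoir points, choosing the colour computably because $C$ is computable.

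To build $h$, first split each reservoir computably into two infinite computable halves. Enumerate $R_i$ in increasing order as $r^i_0<r^i_1<\cdots$, and put $R_i^{\mathrm{even}}=\{r^i_{2j}\}$ and $R_i^{\mathrm{odd}}=\{r^i_{2j+1}\}$. Define
\[
h(r^i_j)=r^i_{2j}\quad (i\in\{0,1\}),\qquad h(x)=x\ \text{ for } x\notin R_0\cup R_1 .
\]
This map is computable, since $R_0$ and $R_1$ are computable and disjoint (one lies inside $A$, the other inside $\overline A$). It is injective, because it is the identity off the reservoirs and maps $R_i$ into $R_i$ injectively. It preserves membership in $A$, because $R_1\subseteq A$ and $R_0\subseteq\overline A$. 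Its range avoids $R_0^{\mathrm{odd}}\cup R_1^{\mathrm{odd}}$.

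Now define $g\colon A\join C\to A$ by $g(\langle0,x\rangle)=h(x)$ and
\[
g(\langle1,y\rangle)=\begin{cases} r^1_{2y+1} & \text{if } y\in C,\\ r^0_{2y+1} & \text{if } y\notin C.\end{cases}
\]
This $g$ is computable and correct on colours. It is injective: the two halves have disjoint ranges, and the second half is injective because the indices $2y+1$ are distinct and $R_0$, $R_1$ are disjoint. So $A\join C\equiv_1 A$.

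For part (ii), take $mR_1=\{\langle i,x\rangle:i<m,\ x\in R_1\}\subseteq mA$ and $mR_0\subseteq\overline{mA}$. These are computable and infinite, since the pairing $\langle\cdot,\cdot\rangle_m$ is a computable bijection with computable inverse.

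There is no real obstacle here. The only point requiring care is leaving infinitely many target positions of each colour free while keeping $h$ injective and colour-preserving, and the even/odd reindexing inside each reservoir handles exactly that.
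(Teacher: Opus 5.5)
Your proof is correct and follows the paper's argument: the same self-injection $r_c(j)\mapsto r_c(2j)$ (identity off the reservoirs) frees the odd-indexed reservoir points, and the $C$-summand is sent into these points according to colour. The differences are cosmetic. You index the free points by $y$ itself rather than by the rank of $y$ within its $C$-colour class, which is still injective because distinct $y$ of the same colour get distinct odd indices and the two colours land in the disjoint sets $R_0,R_1$. In (ii) you use all $m$ columns rather than only column $0$.
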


\begin{proof}
Write
\[
  R_c=\{r_c(0)<r_c(1)<\cdots\}
  \qquad(c<2).
\]
Since each $R_c$ is computable and infinite, its increasing
enumeration $r_c$ is total computable. Moreover, for $x\in R_c$,
the unique index $j$ such that $x=r_c(j)$ is computable from $x$,
being equal to $|\{z<x:z\in R_c\}|$.

Define a computable colour-preserving injection $\rho:\N\to\N$ by
\[
  \rho(r_c(j))=r_c(2j)
\]
and $\rho(x)=x$ outside $R_0\cup R_1$. Its range omits the two
computable infinite sets
\[
  H_c=\{r_c(2j+1):j\in\N\}.
\]
Indeed, membership in $H_c$ is decidable: an element $x$ belongs
to $H_c$ exactly when $x\in R_c$ and its index in the increasing
enumeration of $R_c$ is odd.

Since $H_c\subseteq R_c$ for each $c<2$, we have
\[
  H_1\subseteq R_1\subseteq A,
  \qquad
  H_0\subseteq R_0\subseteq\overline A.
\]
Moreover,
\[
  \rng(\rho)=\N\setminus(H_0\cup H_1).
\]
Thus $H_1$ and $H_0$ are disjoint computable infinite sets of
free target positions of colours $1$ and $0$, respectively.

Map the $A$-summand of $A\join C$ by $\rho$. More precisely,
define a map $\theta:\N\to\N$ on the two summands of $A\join C$
as follows. For every $x\in\N$, put
\[
  \theta(\langle 0,x\rangle)=\rho(x).
\]

Map the $C$-summand into the free sets $H_0$ and $H_1$ according
to membership colour. More precisely, for $c<2$ and $x$ such
that $C(x)=c$, let
\[
  n_c(x)=|\{z<x:C(z)=c\}|
\]
be the rank of $x$ among the elements of $C$-colour $c$, and put
\[
  \theta(\langle 1,x\rangle)
  =r_c(2n_c(x)+1).
\]
Thus the copy of $x$ in the $C$-summand is mapped to the element
of $H_c$ having index $n_c(x)$ in the enumeration
\[
  H_c=\{r_c(2j+1):j\in\N\}.
\]
Since $C$ is computable, the map
\[
  x\longmapsto \bigl(C(x),n_{C(x)}(x)\bigr)
\]
is computable. Hence $\theta$ is total and computable.
Its restriction to the first summand is injective
because $\rho$ is injective. Its restriction to the second
summand is also injective: elements of the same $C$-colour have
distinct ranks, while elements of different colours are mapped
into the disjoint sets $H_0$ and $H_1$.

Moreover,
\[
  \rng(\rho)=\N\setminus(H_0\cup H_1),
\]
whereas the image of the second summand is contained in
$H_0\cup H_1$. Therefore the images of the two summands are
disjoint, and $\theta$ is injective on all of $\N$.

Finally, $\theta$ preserves membership colour. On the first
summand,
\[
  \langle 0,x\rangle\in A\join C
  \iff x\in A
  \iff \rho(x)\in A.
\]
On the second summand,
\[
  \langle 1,x\rangle\in A\join C
  \iff x\in C
  \iff C(x)=1
  \iff r_{C(x)}(2n_{C(x)}(x)+1)\in A,
\]
because $H_1\subseteq A$ and $H_0\subseteq\overline A$.
Consequently, $\theta$ is a one-one reduction of $A\join C$ to
$A$. Conversely, the map
\[
  x\longmapsto\langle 0,x\rangle
\]
is a one-one reduction of $A$ to $A\join C$. Hence
\[
  A\join C\equiv_1 A.
\]
For (ii), fix $m\geq1$ and, for each $c<2$, put
\[
  R_c^{(m)}
  =
  \{\langle 0,x\rangle:x\in R_c\}.
\]
Then $R_0^{(m)}$ and $R_1^{(m)}$ are computable and infinite.
Moreover,
\[
  R_1^{(m)}\subseteq mA
  \qquad\text{and}\qquad
  R_0^{(m)}\subseteq\overline{mA}.
\]
Thus $mA$ has computable reservoirs of both colours.

\end{proof}

The preceding absorption lemma will be used repeatedly in the following
slightly more flexible form. If a computable part of $B$ is a
computably coordinatized copy of $A$, while the remainder of $B$ is
computable, then the remainder can be absorbed without changing the
one-one degree.

\begin{lemma}[Computable remainder absorption]\label{lem:remainder}
Let $A$ have computable reservoirs of both colours. Let $B\subseteq\N$
and let $D\subseteq\N$ be computable. Suppose that there is a computable
bijection $\pi:\N\to D$ satisfying
\[
  x\in A \iff \pi(x)\in B,
\]
and that $B\cap\overline D$ is computable. Then $A\equiv_1 B$.
\end{lemma}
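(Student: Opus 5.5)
The plan is to reduce to Lemma~\ref{lem:reservoir-absorption}(i) by building a computable permutation of $\N$ that carries $B$ onto $A\join C$ for a suitable computable set $C$. Since $D$ is computable, its complement $\overline D$ is computable as well. We split into two cases, according to whether $\overline D$ is infinite or finite.

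\textbf{Case 1: $\overline D$ infinite.} Let $d(0)<d(1)<\cdots$ be the increasing enumeration of $\overline D$; it is total computable, and the index of any $y\in\overline D$ is computable from $y$. Define the computable set $C=\{j: d(j)\in B\}$, which is computable because $B\cap\overline D$ is computable. Now define $\sigma:\N\to\N$ by
\[
  \sigma(y)=\langle0,\pi^{-1}(y)\rangle \quad (y\in D),
  \qquad
  \sigma(y)=\langle1,j\rangle \quad (y=d(j)\in\overline D).
\]
Here $\pi^{-1}(y)$ is computed by searching for the unique $x$ with $\pi(x)=y$. The map $\sigma$ is a computable bijection, and $y\in B\iff\sigma(y)\in A\join C$ holds on both pieces. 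So $\sigma$ and $\sigma^{-1}$ witness $B\equiv_1 A\join C$. Lemma~\ref{lem:reservoir-absorption}(i) then gives $A\join C\equiv_1 A$, and transitivity yields $A\equiv_1 B$.

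\textbf{Case 2: $\overline D$ finite.} Here $A\join C$ is not directly available, so we argue in two directions.
\begin{itemize}
\item For $A\leq_1 B$, the map $\pi$ itself is a one-one reduction.
\item For $B\leq_1 A$, let $F=\overline D$. We map $y\in D$ to $\rho(\pi^{-1}(y))$, where $\rho$ and the free sets $H_1\subseteq A$, $H_0\subseteq\overline A$ are taken from the proof of Lemma~\ref{lem:reservoir-absorption}. We then send the finitely many points of $F$ injectively into $H_{B(y)}$, hard-coding the finite information $B\cap F$.
\end{itemize}
This is the same mechanism as in Case 1, with $C$ replaced by a finite set. Alternatively, Case 2 can be merged into Case 1 by letting $C$ be finite and using only finitely many coordinates of the second summand, with the remaining second-summand coordinates placed outside $C$. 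That option costs nothing, since Lemma~\ref{lem:reservoir-absorption}(i) applies to any computable $C$.

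The only delicate point is the bookkeeping that makes $\sigma$ a genuine computable bijection, in particular that $\pi^{-1}$ is computable on $D$. This holds because $\pi$ is a total computable bijection onto the computable set $D$, so the search for a preimage terminates.
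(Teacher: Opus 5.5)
Your proof is correct and follows essentially the paper's route. The map $\pi$ gives $A\leq_1 B$; splitting $B$ along $D$ gives $B\leq_1 A\join C$ for a computable $C$; and Lemma~\ref{lem:reservoir-absorption}\textup{(i)} then finishes the argument.

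The case split on whether $\overline D$ is finite is unnecessary, and your remark that $A\join C$ is ``not directly available'' when $\overline D$ is finite is not accurate. The paper takes $K=B\cap\overline D$ itself and sends each $y\notin D$ to $\langle 1,y\rangle$. That map is only an injection, not a bijection, but an injection is all that $B\leq_1 A\join K$ requires, so finite $\overline D$ needs no separate treatment.
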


\begin{proof}
The map $\pi$ witnesses $A\leq_1 B$.

Put $K=B\cap\overline D$, which is computable by hypothesis.
Since $\pi$ is a computable bijection onto $D$, its inverse is
computable on $D$. Define
\[
   f(y)=
   \begin{cases}
      \langle 0,\pi^{-1}(y)\rangle_2, & y\in D,\\
      \langle 1,y\rangle_2, & y\notin D.
   \end{cases}
\]
The function $f$ is total computable because $D$ is computable.
It is injective on each of $D$ and $\overline D$, and the images
of these two sets lie in distinct columns.

If $y\in D$, then
\[
   y\in B
   \iff \pi^{-1}(y)\in A
   \iff f(y)\in A\join K.
\]
If $y\notin D$, then
\[
   y\in B
   \iff y\in K
   \iff f(y)\in A\join K.
\]
Thus $B\leq_1 A\join K$. By
Lemma~\ref{lem:reservoir-absorption}\textup{(i)},
\[
   A\join K\equiv_1 A.
\]
Consequently, $B\leq_1 A$, and hence $A\equiv_1 B$.
\end{proof}

The construction and the later classification argument will be organised
around two abstract properties. The first records a dyadic scaling
between successive representatives, while the second says that adjoining
any one-one subobject of the base can be absorbed into a finite autojoin
at some level of the tower. We isolate these notions now so that the
construction and the subsequent abstract argument can be kept separate.

\begin{definition}\label{def:weak-tower}
A sequence $(U_e)_{e\in\N}$ is a \emph{weak dyadic tower} over $U_0$ if
\[
  U_e\equiv_1 2U_{e+1}
  \qquad(e\in\N).
\]
No uniform procedure producing indices of these equivalences is assumed.
\end{definition}

\begin{definition}\label{def:BA}
Let $(U_e)_{e\in\N}$ be a weak dyadic tower over $U=U_0$. The pair
$(U,(U_e)_{e\in\N})$ has the \emph{base absorption property} if, for every
$X\leq_1U$, there are $e\in\N$ and $m\geq 1$ such that
\[
  U\join X\equiv_1 mU_e.
\]
\end{definition}

\section{The block homogenization construction}

We now construct the set $U$ and the weak dyadic tower that will satisfy
the properties isolated above. The construction uses computable
partitions into finite blocks and repeatedly reserves infinitely many
blocks on which a prescribed computable injection has the same
occupancy. The following constant-occupancy lemma is the combinatorial
device that makes this homogenization possible.

\begin{lemma}[Constant occupancy]\label{lem:constant-occupancy}
Let $F$ be a computable equivalence relation all of whose classes have the
same finite cardinality $d\geq 1$. Let
\[
  C_0,C_1,C_2,\ldots
\]
be the canonical enumeration of its classes in increasing order of their
least elements. Let $S\subseteq\N$ be computable and infinite, and let
$\varphi:\N\to\N$ be a total computable injection.
Then there is $k\in\{0,\ldots,d\}$ such that, for every prescribed
$r\geq 1$, there are pairwise disjoint computable infinite sets
\[
  P_0,\ldots,P_{r-1}\subseteq S
\]
such that
\[
  |\rng(\varphi)\cap C_j|=k
  \qquad
  (j\in P_0\cup\cdots\cup P_{r-1}).
\]
The sets $P_i$ may moreover be chosen to contain only indices of classes
whose least element exceeds any prescribed constant.
\end{lemma}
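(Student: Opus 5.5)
The plan is a pigeonhole argument on occupancy levels. The exact occupancy set $\{j\in S:|\rng(\varphi)\cap C_j|=k\}$ need not be computable, since $\rng(\varphi)$ is only c.e. So I would work with the c.e. ``at least $k$'' sets and select $k$ by a maximality condition. First, note that the classes are computable from their indices. Because $F$ is computable and every class has exactly $d$ elements, we can search upward for the least element of the next new class and then search for its $d$ members. Hence $j\mapsto C_j$ (as a canonical finite set) is computable. Since $\min C_0<\min C_1<\cdots$, we have $\min C_j\geq j$, so only finitely many indices have least element below any prescribed constant.

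For $k\leq d+1$ put
\[
  T_k=\{j\in S:|\rng(\varphi)\cap C_j|\geq k\}.
\]
Each $T_k$ is c.e.: $j\in T_k$ iff $j\in S$ and there are $k$ distinct $n$ with $\varphi(n)\in C_j$. The sequence is decreasing, with $T_0=S$ infinite and $T_{d+1}=\emptyset$. Let $k\leq d$ be the largest index with $T_k$ infinite. This $k$ depends only on $F,S,\varphi$, not on $r$, as the statement requires. Then $T_{k+1}$ is finite, so
\[
  E=T_k\setminus T_{k+1}=\{j\in S:|\rng(\varphi)\cap C_j|=k\}
\]
is an infinite c.e. set: a c.e. set minus a finite set.

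Every infinite c.e. set contains an infinite computable subset. Enumerate $E$ and keep each newly enumerated element exactly when it exceeds all previously kept ones. This yields a strictly increasing computable sequence $e_0<e_1<\cdots$ in $E$, whose range $E'$ is computable. Given $r\geq1$ and a constant $c$, discard the finitely many $e_t$ with $\min C_{e_t}\leq c$, and let $P_i=\{e_t: t\equiv i \pmod r\}$ over the remaining $t$, for $i<r$. These sets are computable (membership of $x$ is decided by running the increasing sequence past $x$ and reading off the position), infinite, pairwise disjoint, contained in $S$, and every index in them has occupancy exactly $k$.

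The only real subtlety is the non-computability of exact occupancy. I expect that to be the main obstacle, and the choice of the maximal $k$ with $T_k$ infinite is what resolves it. Everything else is routine.
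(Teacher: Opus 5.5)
Your proposal is correct and follows essentially the same route as the paper: take the maximal $k\le d$ for which the c.e. set of indices with occupancy at least $k$ is infinite, remove the finite set of indices with occupancy at least $k+1$, extract a strictly increasing computable sequence from the resulting infinite c.e. set, and split it by residues mod $r$. The only difference is cosmetic: the paper imposes the lower bound on least elements (its set $E_b$) before extracting the sequence, whereas you discard the finitely many small terms afterwards.
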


\begin{proof}
Let $m_j=\min C_j$. Since $F$ is computable, the set
\[
  M_F=\{m\in\N:(\forall z<m)\,\neg(z\,F\,m)\}
\]
of least representatives of the $F$-classes is computable. Hence
$j\mapsto m_j$, the increasing enumeration of $M_F$, is computable,
and
\[
  x\in C_j \iff x\,F\,m_j.
\]
Thus the family $(C_j)_{j\in\N}$ is uniformly computable. Moreover,
since every class has cardinality $d$, the complete finite list of
the elements of $C_j$ can be computed uniformly from $j$, an index
for $F$, and $d$.
For $t\leq d$, put
\[
  V_t=\{j\in S:|\rng(\varphi)\cap C_j|\geq t\}.
\]
For $t=0$, we have $V_0=S$. For $1\leq t\leq d$,
\[
  j\in V_t
  \iff
  j\in S\ \&\
  \exists n_0<\cdots<n_{t-1}\,
  \bigwedge_{i<t}\bigl(\varphi(n_i)\,F\,m_j\bigr).
\]
Since $\varphi$ is total computable and injective, this is a c.e.\
condition, uniformly in $t$.
Moreover,
\[
  V_0=S\supseteq V_1\supseteq\cdots\supseteq V_d.
\]
Let $k$ be the largest $t\leq d$ for which $V_t$ is infinite, and set
$V_{d+1}=\varnothing$.
Then $V_{k+1}$ is finite. Since $V_k$ is infinite, the set
\[
  E=V_k\setminus V_{k+1}
\]
is infinite. Moreover, for every $j\in S$,
\[
\begin{aligned}
  j\in E
  &\iff j\in V_k\ \&\ j\notin V_{k+1}\\
  &\iff |\rng(\varphi)\cap C_j|\geq k
     \ \&\
     |\rng(\varphi)\cap C_j|<k+1\\
  &\iff |\rng(\varphi)\cap C_j|=k.
\end{aligned}
\]
Here, when $k=d$, we use the convention $V_{d+1}=\varnothing$
together with the bound
\[
  |\rng(\varphi)\cap C_j|\leq d.
\]
Thus $E$ consists exactly of the indices $j\in S$ of classes
having occupancy $k$.

The choice of $k$ and the finite set $V_{k+1}$ is nonuniform.
Once that finite set has been fixed, however, $E$ is c.e.:
enumerate $V_k$ while omitting the finitely many forbidden elements.

Fix $r\geq 1$ and a prescribed bound $b\in\N$. The set
\[
   E_b=\{j\in E:m_j>b\}
\]
is c.e.\ and infinite, since $j\mapsto m_j$ is computable and
strictly increasing, and only finitely many indices satisfy
$m_j\leq b$.

Extract a total computable strictly increasing sequence
\[
   p(0)<p(1)<p(2)<\cdots
\]
of elements of $E_b$: choose an element when it is enumerated,
and after choosing $p(n)$ wait for an enumerated element greater
than $p(n)$ to define $p(n+1)$. Each search terminates because
$E_b$ is infinite.

For $i<r$, put
\[
   P_i=\{p(rn+i):n\in\N\}.
\]
For each $i<r$, the map $n\mapsto p(rn+i)$ is total computable
and strictly increasing, so its range $P_i$ is computable and
infinite. The sets $P_i$ are pairwise disjoint by injectivity
of $p$ and uniqueness of the remainder modulo $r$.
Finally, every $j\in P_0\cup\cdots\cup P_{r-1}$ belongs to $E_b$,
so its class has occupancy $k$ and satisfies $m_j>b$.

\end{proof}

\begin{remark}[Effectivity of the nonuniform choice]
\label{rem:hardcoding}
Given ordinary indices for $F$, $S$, and $\varphi$, together
with $d$, the sets $V_t$ are uniformly c.e. The property
``$V_t$ is infinite'' is $\Pi^0_2$, so $\emptyset''$ can choose
the largest suitable $k\in\{0,\ldots,d\}$.

If $k=d$, then $V_{k+1}=\varnothing$ by convention.
If $k<d$, the maximality of $k$ implies that $V_{k+1}$ is finite:
otherwise $k+1\leq d$ would be a larger index for which
$V_{k+1}$ is infinite.
In this case, put $W=V_{k+1}$ and fix a computable increasing
approximation $(W_s)_{s\in\N}$ by finite sets.

For each $s$, the condition
\[
   \forall u\geq s\;(W_u=W_s)
\]
is $\Pi^0_1$ and hence decidable in $\emptyset'$.
Since $W$ is finite, a search using $\emptyset'$ finds such
an $s$, and then $W=W_s$. Thus $\emptyset''$ suffices to obtain
the exact finite content of $V_{k+1}$.

Once $k$ and this finite set have been fixed, an ordinary
c.e.\ index for $E$ and, for any prescribed $r$ and bound $b$,
ordinary computable indices for the sets $P_i$ can be obtained
effectively from these data. The oracle is used only to select
indices and finite constants that are subsequently hardcoded
into ordinary programs; the resulting programs use no oracle.
\end{remark}

We now assemble the preceding ingredients into the main construction.
At each level, constant occupancy will provide the homogeneous block
families needed for absorption, while separate permanent colour
assignments diagonalize against possible cylinder embeddings. The same
block structure will simultaneously generate the dyadic tower and
preserve computable reservoirs of both colours.

\begin{theorem}[Block Homogenization Construction]
\label{thm:block-homogenization}
There exist a set $U\leq_T\emptyset''$ and a sequence $(U_e)_{e\in\N}$ such
that:
\begin{enumerate}[label=\textup{(\roman*)}]
\item $U_0=U$, and $(U_e)$ is a weak dyadic tower;
\item $U$ is noncomputable and is not a cylinder;
\item every $U_e$ has computable reservoirs of both colours;
\item $(U,(U_e))$ has the base absorption property. More precisely,
      there is an $\emptyset''$-computable list
      $(\varphi_e)_{e\in\N}$ consisting entirely of total computable
      injections from $\N$ to $\N$ and containing every such injection,
      possibly with repetitions, such that, for every $e\in\N$, there is
      $k_e\in\{0,\ldots,2^e\}$ satisfying
      \begin{equation}\label{eq:level-absorption}
        U\join\varphi_e^{-1}(U)
        \equiv_1(2^e+k_e)U_e.
      \end{equation}
\end{enumerate}
\end{theorem}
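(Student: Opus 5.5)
We will build $U$ in stages along nested computable partitions. Level $e$ uses a computable equivalence relation $F_e$ whose classes (\emph{blocks}) all have size $2^e$, with $F_{e+1}$ refining $F_e$ so that each $F_e$-block is the union of exactly two $F_{e+1}$-blocks. The oracle $\emptyset''$ will fix a list $(\varphi_e)$ of all total computable injections: it enumerates all indices, decides totality (a $\Pi^0_2$ question), and replaces non-total ones by the identity. At stage $e$ we will have a computable infinite set $S_e$ of currently free $F_e$-block indices. We then apply Lemma~\ref{lem:constant-occupancy} to $F_e$, $S_e$ and $\varphi_e$, with $r$ large, say $r=4$. This yields an occupancy $k_e\in\{0,\ldots,2^e\}$ and disjoint computable infinite families $P_0,\ldots,P_3\subseteq S_e$ of blocks on which $\rng(\varphi_e)$ meets each block in exactly $k_e$ points. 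By Remark~\ref{rem:hardcoding}, all these choices are $\emptyset''$-computable and are then hardcoded.

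The family $P_0$ will be the \emph{tower region} of level $e$. There we will make $U$, block by block, a faithful copy of $U_e$: the $2^e$ points of each such block carry $2^e$ copies of a single bit of $U_e$. Concretely, we define $U_e$ to be the set read off from any one coordinate of the level-$e$ blocks lying in the still-undetermined region, that is, $S_e$ together with its later refinements. Splitting each $F_e$-block into two $F_{e+1}$-blocks then gives a computable identification of $U_e$ with $2U_{e+1}$. Outside the undetermined region the values of $U$ are fixed permanently, and the blocks of $P_2$ and $P_3$ are assigned constant colours $1$ and $0$. These serve as computable reservoirs for (iii) and guarantee that all remaining differences are computable, so that Lemma~\ref{lem:remainder} applies. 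The family $P_1$ will be used for diagonalization: we make $U$ differ from the $e$-th computable set, and we defeat the $e$-th candidate one-one reduction $\Cyl(U)\to U$. For the latter we colour one block so that some column $\langle x,\cdot\rangle$ forced into finitely many blocks receives inconsistent colours. The oracle $\emptyset''$ suffices to check totality and the finite search. We then set $S_{e+1}$ to consist of the $F_{e+1}$-halves of the blocks of $S_e$ not used in $P_0,\ldots,P_3$, minus finitely many. This gives (i), (ii), (iii), and $U\le_T\emptyset''$.

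For (iv), we will compute $U\join\varphi_e^{-1}(U)$ using the homogeneity. The set $\varphi_e^{-1}(U)$ is $\varphi_e$ pulled back, so it is a one-one copy of $U\cap\rng\varphi_e$. On the level-$e$ tower blocks, $\rng\varphi_e$ meets each block in exactly $k_e$ points, all carrying the same bit. Hence the union of $U$ restricted to these blocks and the $\varphi_e$-preimage of the occupied points is computably a copy of $(2^e+k_e)$ copies of the bit sequence, that is, of $(2^e+k_e)U_e$. The claim is that everything else is handled by one of two facts. First, the portion of $U$ determined at earlier stages is computable and is absorbed by Lemma~\ref{lem:remainder}. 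Second, the portion living in deeper levels is itself in the tower region of $U_e$ and is counted inside the $U_e$-copies.

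This last step is the main obstacle. The deeper blocks refine level $e$, but $\rng\varphi_e$ may meet them with nonconstant occupancy, so the deeper part is not automatically a multiple of $U_e$. The fix I would try first is to arrange that the level-$e$ tower region $P_0$ is the \emph{only} region where $U$ is noncomputable relative to level $e$. Concretely, $U_e$ should be coded entirely by $U$ on the blocks of $P_0$, using a computable bijection between $P_0$ and all of $S_e$'s future. All later-level blocks are then themselves copies inside $P_0$-coordinates, so every noncomputable bit of $U$ is accessed only through level-$e$ homogeneous blocks. With this in place, $\varphi_e$-occupancy elsewhere contributes only computable pieces, which Lemma~\ref{lem:remainder} absorbs, giving \eqref{eq:level-absorption}.
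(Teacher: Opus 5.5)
Your ingredients match those of the paper's proof: constant occupancy with $r=4$, a homogeneous tower family, constant-colour reservoir blocks, a diagonalization step, and remainder absorption. The gap is in how the levels are linked, and that is exactly where (iv) is decided.

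\textbf{Nesting direction.} $F_0$ is equality and blocks grow from size $2^e$ to $2^{e+1}$, so $F_{e+1}$ must be \emph{coarser} than $F_e$: each $F_{e+1}$-class is the union of two $F_e$-classes with the same label. ``Refining'' is incompatible with your own block sizes. Moreover, splitting monochromatic $F_e$-blocks into halves would make $U_{e+1}$ look like $2U_e$, which is the opposite of $U_e\equiv_1 2U_{e+1}$.

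\textbf{Continuation region.} You let $S_{e+1}$ come from the blocks of $S_e$ \emph{not} used in $P_0,\ldots,P_3$. Those blocks have uncontrolled $\varphi_e$-occupancy, yet they later receive noncomputable colours. So $\varphi_e^{-1}(U)$ on their preimage is not a multiple of anything, and \eqref{eq:level-absorption} cannot be derived. Your closing ``fix'' points the right way, but it is not carried out, and it still assumes deeper blocks refine level $e$. Under refinement the obstacle you describe is real: a tower block could split into differently coloured halves, with its $k_e$ points of $\rng(\varphi_e)$ distributed among them in a block-dependent way.

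\textbf{The missing idea.} At the end of level $e$, permanently colour every still-uncommitted block outside your tower family $P_0$. Then the \emph{only} uncommitted $F_e$-classes are tower classes, each with exactly $k_e$ points of $\rng(\varphi_e)$. Build $F_{e+1}$ by pairing classes of equal label, so that all future noncomputable information lives on amalgamated tower classes. Since later levels never split an $F_e$-class, each tower class stays monochromatic in the final $U$ and keeps its $k_e$ range points; later levels only decide its colour.

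Now define $U_e=\{j:C^e_j\subseteq U\}$ over \emph{all} $F_e$-classes, rather than your circular ``read-off'' from the undetermined region. Take the core $V_e=\{s:C^e_{h_e(s)}\subseteq U\}$, where $h_e$ enumerates the tower and reservoir blocks of level $e$ (your $P_0\cup P_2\cup P_3$), so that $V_e$ has computable reservoirs. Lemma~\ref{lem:remainder} then gives $U_e\equiv_1V_e$ and $U\equiv_1 2^eU_e\equiv_1 2^eV_e$. It also gives $\varphi_e^{-1}(U)\equiv_1 k_eV_e$ via the map $x\mapsto(r(x),s(x))$, where $s(x)$ is the core index of the block containing $\varphi_e(x)$ and $r(x)<k_e$ is the rank of $x$ among the inputs landing in that block. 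Membership off this region is computable.

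\textbf{Minor points.}
\begin{itemize}
\item Your list $(\varphi_e)$ must also discard non-injective indices, not only non-total ones.
\item The cylinder diagonalization should take $x$ in an uncommitted block and compare it with a single value $g(x,t)$ lying outside that block, colouring the two blocks oppositely. An injective column is never ``forced into finitely many blocks''.
\end{itemize}
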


\begin{proof}
We use $\emptyset''$ only as a metatheoretic oracle.
Let $(\psi_i)_{i\in\N}$ be a fixed standard enumeration of the
partial computable functions from $\N$ to $\N$. Since the property
\[
  \psi_i\text{ is total and injective}
\]
is $\Pi^0_2$, the oracle $\emptyset''$ computes the increasing
enumeration
\[
  a_0<a_1<a_2<\cdots
\]
of all indices having this property. Put
\[
  \varphi_e=\psi_{a_e}.
\]
Thus every $\varphi_e$ is a total computable injection, and every
total computable injection occurs in the list, possibly more than
once extensionally.

Similarly, using a fixed standard enumeration of the partial
computable functions from $\N\times\N$ to $\N$, fix an
$\emptyset''$-computable list
\[
  \gamma_0,\gamma_1,\ldots
\]
consisting entirely of total computable injections and containing
every such injection. Repetitions are harmless.

We construct an increasing sequence
\[
  F_0\subseteq F_1\subseteq F_2\subseteq\cdots
\]
of equivalence relations. Each $F_e$ is an ordinary computable relation,
although the sequence of their indices need not be computable. Every
$F_e$-class will have cardinality $2^e$. At the beginning of level $e$,
each $F_e$-class carries an inherited computable label in $\{0,1,*\}$;
call this labelling $\lambda_e^-$. During level $e$ we extend the
commitments and obtain a completed computable labelling $\lambda_e^+$.
A class labelled $0$ or $1$ never changes colour, whereas a class labelled
$*$ is uncommitted. At the end of each level there are infinitely many
classes of each of the three labels.

Let
\[
  C^e_0,C^e_1,\ldots
\]
be the canonical enumeration of the $F_e$-classes by increasing least
element.
Since $F_e$ is computable and every $F_e$-class has the known
cardinality $2^e$, the class index of a number, the complete list of
the elements of a given class, and the least element of a class are
uniformly computable from $e$ and an index for $F_e$.
We maintain the following invariant. At the beginning of level $e$,
there are infinitely many $F_e$-classes labelled $*$, and every such
class has least element greater than $e-1$. Moreover, for $e>0$
there are infinitely many classes carrying each of the labels $0$ and
$1$. Classes carrying labels $0$ or $1$ have permanent final colour.
At the end of level $e$, the uncommitted classes will all have least
element greater than $e$.

At level $0$, let $F_0$ be equality and let $\lambda_0^-$ label every
singleton by $*$. Suppose that $F_e$ and $\lambda_e^-$ have been defined.

\medskip
\noindent
\textbf{Diagonalization against $\gamma_e$.}

Choose the least $F_e$-class $C$, in the canonical enumeration
$(C^e_j)_{j\in\N}$, such that $C$ is labelled $*$ and
$\min C>e$, and let
\[
   x=\min C.
\]
Since the values
\[
   \gamma_e(x,0),\gamma_e(x,1),\ldots
\]
are pairwise distinct and $C$ is finite, choose the least $t$ such
that
\[
   y=\gamma_e(x,t)\notin C.
\]
If the $F_e$-class of $y$ already has colour $c\in\{0,1\}$,
assign colour $1-c$ to $C$. If the $F_e$-class of $y$ is also
labelled $*$, assign colour $0$ to $C$ and colour $1$ to the
$F_e$-class of $y$. These assignments are permanent.

\medskip
\noindent
\textbf{Homogenization for $\varphi_e$.}
After the preceding finite action, let $S_e$ be the set of indices $j$ such
that $C^e_j$ is still labelled $*$ and has least element greater than $e$.
This is a computable infinite set. Apply
Lemma~\ref{lem:constant-occupancy} with $d=2^e$ and $r=4$. We obtain a
number $k_e\in\{0,\ldots,2^e\}$ and pairwise disjoint computable infinite sets
\[
  P_e,\quad Q^0_e,\quad Q^1_e,\quad R_e
  \subseteq S_e
\]
such that
\begin{equation}\label{eq:constant-occupancy-level}
  |\rng(\varphi_e)\cap C^e_j|=k_e
\end{equation}
for every $j$ in their union. Put
\begin{equation}
  H_e=P_e\cup Q^0_e\cup Q^1_e.
\end{equation}
Leave the classes indexed by $P_e$ labelled $*$, assign colour $0$ to the
classes indexed by $Q^0_e$, and assign colour $1$ to the classes indexed
by $Q^1_e$. Assign colour $0$ to every other class still labelled $*$,
including those indexed by $R_e$. Classes already coloured by earlier
levels or by the diagonalization retain their colours.

The resulting completed labelling is denoted by $\lambda_e^+$. Thus the
classes labelled $*$ by $\lambda_e^+$ are exactly those indexed by $P_e$,
and in particular their least elements exceed $e$. There are infinitely
many classes of all three labels: $P_e$ supplies $*$, $Q^0_e$ supplies
$0$, and $Q^1_e$ supplies $1$. The extra set $R_e$ ensures that
$\overline{H_e}$ is infinite.

\medskip
\noindent
\textbf{Passage to $F_{e+1}$.}
For each label $c\in\{0,1,*\}$, list in increasing order of their indices
all $F_e$-classes on which $\lambda_e^+$ has value $c$, pair consecutive
classes, and let each pair form one $F_{e+1}$-class. The new class
inherits the common label $c$; this defines $\lambda_{e+1}^-$. Then
$F_{e+1}$ is computable, each of its classes has size $2^{e+1}$, and every
$F_{e+1}$-class is the union of exactly two $F_e$-classes of the same
completed label.

To see computability explicitly, given $x,y$, compute the indices of their
$F_e$-classes and their $\lambda_e^+$-labels. If the labels differ, then
$x\not\mathrel{F_{e+1}}y$. If they agree, compute the ranks of the two
class indices in the computable list of classes having that label; the two
classes are amalgamated exactly when those ranks form the same consecutive
pair.
Since each label occurs on infinitely many $F_e$-classes, this
pairing produces infinitely many $F_{e+1}$-classes of each label.
Moreover, every $F_{e+1}$-class labelled $*$ has least element
greater than $e$, because both of its $F_e$-children do. Thus the
induction invariant holds at level $e+1$.
This completes the recursion.

\medskip
\noindent
\textbf{Effectivity of the recursion.}
We verify by induction on $e$ that $\emptyset''$ computes ordinary indices
for $F_e$, $\lambda_e^-$, $\lambda_e^+$, and the auxiliary computable
sets chosen at level $e$. Given the data for level $e$, the
diagonalization against $\gamma_e$ is an ordinary computable search. The
sets
\[
  V_t=\{j\in S_e:|\rng(\varphi_e)\cap C^e_j|\geq t\}
  \qquad(t\leq 2^e)
\]

are uniformly c.e. The oracle $\emptyset''$ decides which of them
are infinite and therefore chooses the maximal $k_e$. If
$k_e=2^e$, put $V_{k_e+1}=\varnothing$. Otherwise
$\emptyset''$ finds the exact finite content of $V_{k_e+1}$.

With that finite set hardcoded, the proof
of Lemma~\ref{lem:constant-occupancy} yields ordinary computable indices
for $P_e,Q^0_e,Q^1_e,R_e$. The completed labelling $\lambda_e^+$ and the
pairing construction above then yield ordinary computable indices for
$F_{e+1}$ and $\lambda_{e+1}^-$. Thus the entire finite construction up
to any prescribed level is uniformly recoverable from $\emptyset''$.

\medskip
\noindent
\textbf{Definition of $U$.}
Every number eventually belongs to a coloured class. Indeed, after level
$e$ every uncommitted $F_e$-class has least element greater than $e$; the
class containing $x$ has least element at most $x$, so it is coloured no
later than level $x$. Define
\begin{equation}
  x\in U
  \iff
  \text{the class containing $x$ is eventually assigned colour $1$}.
\end{equation}
Classes are amalgamated only with classes of the same label. Consequently
$U$ is saturated with respect to every $F_e$:
\begin{equation}
  x\,F_e\,y \quad\Longrightarrow\quad
  [x\in U\iff y\in U].
\end{equation}
By the preceding effectivity verification, $\emptyset''$ computes the
completed construction through level $x$. Since the colour of $x$ is
fixed by that level, it follows that $U\leq_T\emptyset''$.

\medskip
\noindent
\textbf{Noncylindricity.}
Let $\widehat g:\N\to\N$ be any total computable injection, and define
\[
   g(x,t)=\widehat g(\langle x,t\rangle).
\]
Then $g:\N\times\N\to\N$ is a total computable injection and hence
occurs as some $\gamma_e$.

At level $e$ we chose $x,t$ and permanently assigned
opposite colours to the $F_e$-classes of $x$ and $g(x,t)$. Thus
\[
  U(x)\neq U(g(x,t)).
\]

If $\widehat g$ were a one-one reduction of $\Cyl(U)$ to $U$, then
\[
  U(x)=\Cyl(U)(\langle x,t\rangle)
  =U(\widehat g(\langle x,t\rangle))
  =U(g(x,t)),
\]
contrary to the preceding inequality. Hence

\begin{equation}\label{eq:noncylindricity}
  \Cyl(U)\nleq_1U.
\end{equation}
The sets $Q^1_0$ and $Q^0_0$ are computable infinite subsets of $U$ and
$\overline U$, respectively. Thus $U$ is infinite and coinfinite. If
$U$ were computable, then both colour classes of $U$ and of $\Cyl(U)$
would be computable and infinite. Matching, for each colour, the $n$-th
element of the corresponding colour class of $\Cyl(U)$ with the $n$-th
element of that colour class of $U$ would give a computable
colour-preserving bijection, contradicting (\ref{eq:noncylindricity}). Therefore $U$ is
noncomputable and noncylindrical.

\medskip
\noindent
\textbf{The weak dyadic tower.}
Define
\begin{equation}
  U_e=\{j:C^e_j\subseteq U\}.
\end{equation}
Each $F_{e+1}$-class consists of exactly two $F_e$-classes of the same
final colour. Given an $F_e$-class, one can computably find its
$F_{e+1}$-parent and determine whether it is the first or second child;
conversely, from a parent and a child number one can recover the child.
This gives a computable colour-preserving bijection witnessing
\begin{equation}\label{eq:weak-dyadic-step}
  U_e\equiv_1 2U_{e+1}.
\end{equation}
Since $F_0$ is equality, $U_0=U$. Moreover, the computable sets $Q^1_e$
and $Q^0_e$ are infinite subsets of $U_e$ and $\overline{U_e}$,
respectively. Thus every $U_e$ has reservoirs of both colours.

For later use, let $d_e=2^e$. The map sending $x$ to the pair consisting
of the rank of $x$ in its $F_e$-class and the index of that class is a
computable colour-preserving bijection. Hence
\begin{equation}\label{eq:block-coordinate-equivalence}
  U\equiv_1 d_eU_e.
\end{equation}

\medskip
\noindent
\textbf{Base absorption.}
Fix $e$ and put
\[
  X_e=\varphi_e^{-1}(U),
  \qquad d=2^e.
\]
Let $h_e:\N\to H_e$ be the increasing enumeration of $H_e$, and set
\begin{equation}
  V_e=\{s:C^e_{h_e(s)}\subseteq U\}.
\end{equation}
The inverse images under $h_e$ of $Q^1_e$ and $Q^0_e$ are computable
infinite subsets of $V_e$ and $\overline{V_e}$, respectively, so $V_e$
has reservoirs of both colours.
Since the classes labelled $*$ by $\lambda_e^+$ are exactly those
indexed by $P_e\subseteq H_e$, every class whose index lies outside
$H_e$ already has its permanent final colour. Hence
\[
   U_e\cap\overline{H_e}
   =
   \{j\notin H_e:\lambda_e^+(j)=1\}
\]
is computable.

Apply
Lemma~\ref{lem:remainder} with
\[
  A=V_e,\qquad B=U_e,\qquad D=H_e,\qquad \pi=h_e.
\]
Since $s\in V_e$ iff $h_e(s)\in U_e$, the lemma gives
\begin{equation}\label{eq:core-equivalence}
  U_e\equiv_1V_e.
\end{equation}
Applying the witnessing one-one reductions separately on each of
the $d$ columns gives
\[
  dU_e\equiv_1dV_e.
\]
Together with (\ref{eq:block-coordinate-equivalence}),
\begin{equation}\label{eq:base-core-equivalence}
  U\equiv_1dV_e.
\end{equation}

Let
\begin{equation}
  L_e=\bigcup_{j\in H_e}C^e_j
  \qquad\text{and}\qquad
  D_e=\varphi_e^{-1}(L_e).
\end{equation}
Both sets are computable. If $k_e=0$, then $D_e=\varnothing$ by (\ref{eq:constant-occupancy-level}),
and membership in $X_e$ is computable because every value of $\varphi_e$
falls into an $F_e$-class outside $H_e$, whose colour is computable.
Lemma~\ref{lem:reservoir-absorption} and (\ref{eq:block-coordinate-equivalence}) then give
\[
  U\join X_e\equiv_1U\equiv_1dU_e,
\]
which is (\ref{eq:level-absorption}) in this case.

Suppose now that $k_e>0$. For $x\in D_e$, let $j(x)\in H_e$ be the index
of the $F_e$-class containing $\varphi_e(x)$, let
\[
  s(x)=h_e^{-1}(j(x)),
\]
and define
\begin{equation}
  r(x)=|\{z<x:\varphi_e(z)\in C^e_{j(x)}\}|.
\end{equation}
By (\ref{eq:constant-occupancy-level}), $r(x)<k_e$. The map
\begin{equation}\label{eq:rank-bijection}
   x\longmapsto (r(x),s(x))
\end{equation}
is a computable bijection from $D_e$ onto
$\{0,\ldots,k_e-1\}\times\N$. Its inverse is computable: given $(r,s)$,
search, in increasing order, for the $(r+1)$-st input whose
$\varphi_e$-value belongs to $C^e_{h_e(s)}$;
the search terminates because that class contains exactly
$k_e$ points of $\rng(\varphi_e)$. Furthermore,
\[
  x\in X_e
  \iff
  C^e_{j(x)}\subseteq U
  \iff
  s(x)\in V_e.
\]

Let $\beta_e:D_e\to\{0,\ldots,k_e-1\}\times\N$ denote the
bijection in (\ref{eq:rank-bijection}). Define
$\pi_e:\N\to D_e$ by
\[
   \pi_e(\langle r,s\rangle_{k_e})
   =
   \beta_e^{-1}(r,s)
   \qquad(r<k_e,\ s\in\N).
\]
Then $\pi_e$ is a computable bijection.

The preceding colour
calculation says exactly that
\[
  z\in k_eV_e \iff \pi_e(z)\in X_e.
\]
On $\overline{D_e}$, membership in $X_e$ is computable, because
$\varphi_e(x)$ lies in an $F_e$-class outside $H_e$. Since $k_eV_e$ has
reservoirs, Lemma~\ref{lem:remainder}, applied with
\[
  A=k_eV_e,\qquad B=X_e,\qquad D=D_e,\qquad \pi=\pi_e,
\]
yields
\begin{equation}\label{eq:pullback-core-equivalence}
  X_e\equiv_1k_eV_e.
\end{equation}
Using (\ref{eq:base-core-equivalence}), (\ref{eq:pullback-core-equivalence}), and (\ref{eq:core-equivalence}), we obtain
\[
  U\join X_e
  \equiv_1(d+k_e)V_e
  \equiv_1(d+k_e)U_e.
\]
This proves (\ref{eq:level-absorption}) for the fixed $e$.

All reductions used above are ordinary computable functions. For the
fixed level $e$, their programs may hardcode the finite constant $k_e$
and ordinary indices for the relevant computable level-$e$ data,
including $F_e$, $H_e$, $\lambda_e^+$, $\varphi_e$, $h_e$, and the
auxiliary computable maps. Later levels do not alter any of these
data, do not split any $F_e$-class, and do not change any colour
already assigned at level $e$. They may amalgamate the still
uncommitted $F_e$-classes indexed by $P_e$, but their only effect
relevant to the preceding colour calculations is to determine the
final colours of those classes. These final colours are exactly
recorded by the core set $V_e$. Hence all the reductions constructed
above remain valid for the final sets.

Since every total computable injection occurs among the $\varphi_e$,
the base absorption property follows.
\end{proof}

\begin{remark}\label{rem:nonuniform}
Every fixed level yields ordinary computable witnesses.
No computable uniform choice of witnesses throughout the whole
infinite tower is assumed. Each witnessing reduction constructed
above uses only finitely many fixed levels, whose ordinary
computable indices may be hardcoded into its program.

The levels are fixed for the reduction as a whole, not merely
finite in number on each individual input.
\end{remark}

\section{From absorption to exhaustivity}

We now leave the specific block construction and turn to the abstract
reduction-theoretic argument that converts base absorption into an
exhaustive classification of the whole bounded finite-one degree.
The main ingredients will be a cancellation principle for one-one
reductions, the strict behaviour of finite autojoins of a noncylindrical
set, and the resulting dyadic scalar calculus.

The first ingredient is a cancellation lemma. In general, a common
summand cannot simply be cancelled from a one-one reduction. The
additional hypothesis $Z\leq_{\fo}Q$ provides enough finite control to
make cancellation possible. The proof encodes the two reductions in a
computable graph and then extracts, component by component, an injective
reduction from $P$ to $Q$.

\begin{lemma}[Finite-one cancellation]\label{lem:cancellation}
Let $P,Q,Z\subseteq\N$. If
\[
  P\join Z\leq_1Q\join Z
  \qquad\text{and}\qquad
  Z\leq_{\fo}Q,
\]
then
\[
  P\leq_1Q.
\]
\end{lemma}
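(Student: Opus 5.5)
The plan is to fix a one-one reduction $f$ of $P\join Z$ to $Q\join Z$ and a finite-one reduction $g$ of $Z$ to $Q$, and to build a one-one reduction $h$ of $P$ to $Q$ by following chains through the $Z$-summand. For $x\in\N$ put $w_0=\langle0,x\rangle$ and iterate. If $f(w_i)=\langle0,q\rangle$, the chain \emph{terminates} at $q$. If $f(w_i)=\langle1,z_{i+1}\rangle$, put $w_{i+1}=\langle1,z_{i+1}\rangle$ and continue. Since $f$ preserves colour, every $z_i$ has the colour $P(x)$, and so does a terminal value $q$.

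The first step is to record the structural facts that come from injectivity of $f$:
\begin{enumerate}[label=\textup{(\alph*)}]
\item the $z_i$ along one chain are pairwise distinct, since a repetition would force some $z_j$ to be an $f$-image both of $\langle0,x\rangle$ and of $\langle1,z_{j-1}\rangle$;
\item chains starting from different $x$ are disjoint;
\item distinct terminating chains end at distinct $q$.
\end{enumerate}
So the terminating chains already give an injective, colour-correct partial map $P\to Q$. The only problem is the nonterminating chains. There the $z_i$ are infinitely many distinct elements of $Z$ of colour $P(x)$. Since $g$ is finite-one, the values $g(z_i)$ range over infinitely many distinct points of $Q$, all of colour $P(x)$.

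Next, I would define $h(x)$ by a stagewise greedy search, handling $x=0,1,2,\ldots$ in order and keeping the finite set $W$ of targets already assigned to smaller inputs. For $x$, walk the chain. At step $i$, if the chain has terminated at $q$, set $h(x)=q$, provided $q\notin W$. Otherwise take the candidate $q_i=g(z_i)$ and accept it if $q_i\notin W$ and $q_i$ is not \emph{claimed} by a terminal value found within $i$ steps of the chain of some $x'\le i$. When a candidate is rejected, move on to $q_{i+1}$. Because $g$ is finite-one and the $z_i$ are distinct, only finitely many $i$ have $g(z_i)$ in any given finite set. An infinite chain must therefore eventually produce an acceptable candidate, and the search halts. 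This gives totality and computability, and colour preservation is automatic.

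The main obstacle is injectivity, namely a clash between a target chosen through $g$ and the terminal value of a chain that is only discovered later. In that case the later input has to be diverted. My plan is to treat this as a finite-injury conflict: a later $x'$ whose terminal value $q$ is already in $W$ ignores $q$ and continues along its own chain. When $x'$ runs out of chain steps, I would recover a fresh target by using the $g$-images of the \emph{earlier} chain's $Z$-points, which were bypassed when that chain stopped, shifted one step. This is the component-by-component extraction in a bipartite graph, analogous to the Myhill/Cantor--Schröder--Bernstein back-and-forth. The point to verify carefully is that each target is displaced at most finitely often, because finite-one-ness of $g$ bounds the number of $z$'s competing for it. From that one obtains a well-defined computable injection. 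If the greedy version fails, the fallback is to define $h$ on whole graph components at once, matching $P$-nodes to $Q$-nodes along the component with $g$-edges used only once $f$-edges are exhausted.
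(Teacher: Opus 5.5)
Your chain facts (a)--(c) are correct. However, the greedy procedure you describe is not total, and the repair you sketch is not yet an argument.

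The termination claim fails. The set of values ``claimed by a terminal value found within $i$ steps of the chain of some $x'\le i$'' is not a fixed finite set; it grows with $i$. So finite-one-ness of $g$ does not force an acceptable candidate. Here is a concrete failure:
\begin{itemize}
\item Let $P=Q=Z=\N$.
\item Put $f(\langle0,0\rangle)=\langle1,1\rangle$ and $f(\langle1,0\rangle)=\langle0,1\rangle$.
\item For $i\ge1$ put $f(\langle1,i\rangle)=\langle1,i+1\rangle$ and $f(\langle0,i\rangle)=\langle0,2i\rangle$.
\item Put $g(0)=1$ and $g(i)=2i$ for $i\ge1$.
\end{itemize}
The chain of $x=0$ is $z_i=i$. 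Each candidate $g(z_i)=2i$ is the terminal value of the chain of $x'=i\le i$, found in one step. So every candidate is rejected and $h(0)$ is never defined.

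If you drop the look-ahead and instead divert later inputs, you must prove that a diverted input always finds a fresh target. This is exactly the hard case. An input with $f(\langle0,x'\rangle)=\langle0,q\rangle$ and $q$ already used has no $Z$-points of its own. Whatever target you hand it may itself be the terminal value of a still later input, so diversions can cascade. Also, ``each target is displaced at most finitely often'' is a finite-injury statement. If assigned values are ever revised, it gives at best a limit-computable map, not a computable one.

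The missing idea is to search the entire connected component rather than the forward chain, and to prove by counting that the search succeeds. Take the four vertex copies $x_P^-,z_Z^-,q_Q^+,w_Z^+$. Add the $f$-edges, matching edges joining $z_Z^-$ and $z_Z^+$, and $g$-edges joining $z_Z^+$ and $g(z)_Q^+$. Every component is then monochromatic and uniformly c.e. Two counting facts follow:
\begin{itemize}
\item In a finite component $K$, injectivity of $f$ together with the matching gives $|K\cap P^-|\le|K\cap Q^+|$.
\item A component with only finitely many $Q^+$-vertices is finite. Finite-one-ness of $g$ bounds its $Z^+$-vertices, the matching bounds its $Z^-$-vertices, and injectivity of $f$ bounds its $P^-$-vertices.
\end{itemize}
With these, the plain rule ``for $p=0,1,2,\ldots$ enumerate the component of $p_P^-$ until an unused $Q^+$-vertex appears'' always halts. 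It never revises a value and gives a computable one-one reduction.

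Your fallback mentions components, but as stated it is not effective. You cannot ``define $h$ on whole components at once,'' nor use $g$-edges ``only once $f$-edges are exhausted,'' because you cannot decide whether a component is finite or whether a chain terminates. Without the two counting facts, there is no reason any of your searches terminates.
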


\begin{proof}
Fix an injective reduction
\[
  f:P\join Z\leq_1Q\join Z
\]
and a finite-one reduction
\[
  g:Z\leq_{\fo}Q.
\]
Use four disjoint computable copies of $\N$, denoted by
\[
  P^-,\quad Z^-,\quad Q^+,\quad Z^+.
\]
Write $x_P^-,z_Z^-,q_Q^+,w_Z^+$ for the vertices carrying the indicated
underlying numbers, and give them membership colours $P(x)$, $Z(z)$,
$Q(q)$, and $Z(w)$, respectively. We use the fixed binary coding of the
two summands, so every value of $f$ has a unique form
$\langle j,y\rangle$ with $j<2$.

Build a computable, and hence c.e., undirected graph as follows. For each $x$, if
\[
  f(\langle 0,x\rangle)=\langle 0,q\rangle,
\]
add an edge from $x_P^-$ to $q_Q^+$, while if
\[
  f(\langle 0,x\rangle)=\langle 1,z\rangle,
\]
add an edge from $x_P^-$ to $z_Z^+$. Similarly, for each $z$, use the
value of $f(\langle 1,z\rangle)$ to join $z_Z^-$ to the corresponding
vertex in $Q^+\cup Z^+$. In addition, add
\begin{enumerate}[label=\textup{(\arabic*)}]
\item an edge between $z_Z^-$ and $z_Z^+$ for every $z$;
\item an edge between $z_Z^+$ and $g(z)_Q^+$ for every $z$.
\end{enumerate}
Adjacency is decidable from the types and underlying numbers of
the two vertices, by evaluating $f$ or $g$ and checking the
matching condition where appropriate. No membership test in
$P$, $Q$, or $Z$ is used in constructing the graph.
Every edge joins vertices of the same membership colour: this follows from
the reduction property of $f$ for the first family of edges, from the
identical $Z$-labels for the matching edges, and from the reduction
property of $g$ for the last family. Hence each connected component is
monochromatic.

Let $K$ be a finite component. The matching between $Z^-$ and $Z^+$
gives
\[
  |K\cap Z^-|=|K\cap Z^+|.
\]
Since $f$ is injective and maps every left vertex of $K$ to a right vertex
of $K$,
\[
  |K\cap P^-|+|K\cap Z^-|
  \leq
  |K\cap Q^+|+|K\cap Z^+|.
\]
Hence
\begin{equation}\label{eq:finite-component-balance}
  |K\cap P^-|\leq|K\cap Q^+|.
\end{equation}

Every infinite component contains infinitely many $Q^+$-vertices.
Indeed, let $K$ be a component such that $K\cap Q^+$ is finite.
If $z_Z^+\in K$, then the edge from $z_Z^+$ to $g(z)_Q^+$
shows that $g(z)_Q^+\in K$. Hence
\[
  \{z:z_Z^+\in K\}
  \subseteq
  \bigcup_{\{q:q_Q^+\in K\}} g^{-1}(q).
\]
The set on the right is finite, since $K\cap Q^+$ is finite and
$g$ is finite-one. Thus $K\cap Z^+$ is finite. The matching
edges then show that $K\cap Z^-$ is finite. Finally, $f$ maps
the left-hand vertices of $K$ injectively into the finite set
\[
  K\cap(Q^+\cup Z^+),
\]
so $K\cap P^-$ is finite as well. Therefore $K$ is finite.
This proves the claim.

Construct $h:\N\to\N$ recursively in increasing order of its
argument. The connected component of a vertex is uniformly c.e.,
since it can be enumerated by enumerating all finite paths from that
vertex.

Suppose that $h(0),\ldots,h(p-1)$ have been defined. Enumerate the
component of $p_P^-$ until a vertex $q_Q^+$ is found such that
\[
  q\notin\{h(0),\ldots,h(p-1)\},
\]
and put $h(p)=q$.

This procedure does not require deciding whether the component
is finite or infinite, nor computing the complete content of
any fibre of $g$. It uses only the enumeration of the component
and the finite list of previously chosen values.
This search terminates. If the component is infinite, it contains
infinitely many $Q^+$-vertices, whereas only finitely many values
have previously been used. Suppose instead that the component is
a finite component $K$. Any previously used $Q^+$-vertex belonging
to $K$ was chosen for an earlier argument $p'$ with
$p_P^{\prime-}\in K$. Since the current vertex $p_P^-$ also lies
in $K$, at most
\[
  |K\cap P^-|-1
\]
vertices of $K\cap Q^+$ have previously been used. By
(\ref{eq:finite-component-balance}),
\[
  |K\cap P^-|\leq |K\cap Q^+|,
\]
so an unused $Q^+$-vertex remains.

It follows that $h$ is total and computable. It is injective by
construction. Since $p_P^-$ and $h(p)_Q^+$ belong to the same
monochromatic component,
\[
  p\in P\iff h(p)\in Q.
\]
Thus $P\leq_1Q$.
\end{proof}

Cancellation allows us to convert noncylindricity into a strict
separation between finite autojoins. The next lemma shows that, for a
noncylindrical set, adding further copies can never collapse the
one-one degree.

\begin{lemma}[Strict finite autojoins]\label{lem:strict-autojoins}
If $A$ is not a cylinder, then
\[
  A<_12A<_13A<_1\cdots.
\]
\end{lemma}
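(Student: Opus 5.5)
The plan has two parts. First I show the non-strict inequalities $nA\leq_1(n+1)A$, which need nothing about $A$. Then I show strictness by a cancellation argument that reduces everything to the single fact that $2A\nleq_1 A$ when $A$ is not a cylinder.

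For the non-strict part, the map $\langle i,x\rangle_n\mapsto\langle i,x\rangle_{n+1}$ embeds $nA$ into $(n+1)A$ as its first $n$ columns. It is computable, injective and colour-preserving, so $nA\leq_1(n+1)A$ for every $n\geq1$.

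For strictness, suppose toward a contradiction that $(n+1)A\leq_1 nA$ for some $n\geq1$. Up to a computable permutation,
\[
  (n+1)A=2A\join(n-1)A
  \qquad\text{and}\qquad
  nA=A\join(n-1)A.
\]
When $n=1$ there is no common summand and the hypothesis is already $2A\leq_1 A$. When $n\geq2$, I apply Lemma~\ref{lem:cancellation} with
\[
  P=2A,\qquad Q=A,\qquad Z=(n-1)A.
\]
Its side condition $Z\leq_{\fo}Q$ holds because the projection $\langle i,x\rangle\mapsto x$ is a many-one reduction of $(n-1)A$ to $A$ whose fibres have size $n-1$. Cancellation then yields $2A\leq_1 A$ in this case as well.

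It remains to show that $2A\leq_1A$ forces $A$ to be a cylinder; this is the main step. Let $f:2A\leq_1A$ and write $f_i(x)=f(\langle i,x\rangle)$ for $i<2$. The maps $f_0$ and $f_1$ are injective self-reductions of $A$ with disjoint ranges. Iterating them, I define
\[
  \sigma(x,\varepsilon)=x,\qquad
  \sigma(x,w i)=f_i\bigl(\sigma(x,w)\bigr)\quad(i<2).
\]
I then claim that $w\mapsto\sigma(x,w)$ is injective for fixed $x$, and more generally that $(x,w)\mapsto\sigma(x,w)$ is injective once $x$ is restricted to the complement of $\rng(f_0)\cup\rng(f_1)$.

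The trouble is that this complement may be empty, for instance when $f$ is a bijection. To avoid that, I use only words that begin with the letter $0$ and take $g(x,t)=\sigma(x,0\,1^t)$ or another suitable injective coding of words. Here the risk is collisions between different base points $x$, since $f_0(x)$ can coincide with an iterate of some other $x'$. I expect this to be the hardest part.

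My first attempt at that step is the following standard trick. Set
\[
  g(\langle x,t\rangle)=f_1^{\,t}\bigl(f_0(x)\bigr)
\]
and check injectivity directly. Suppose $f_1^{\,t}f_0(x)=f_1^{\,s}f_0(y)$ with $t\geq s$. Cancelling $f_1$ (which is injective) $s$ times gives
\[
  f_1^{\,t-s}f_0(x)=f_0(y).
\]
If $t>s$, the left side lies in $\rng(f_1)$ and the right side in $\rng(f_0)$, which are disjoint; so $t=s$, and then $x=y$ by injectivity of $f_0$. Hence $g$ is injective and total computable. It also preserves colour, because $f_0$ and $f_1$ do: $A(g(\langle x,t\rangle))=A(x)=\Cyl(A)(\langle x,t\rangle)$. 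So $g$ is a one-one reduction $\Cyl(A)\leq_1A$. Since $A\leq_1\Cyl(A)$ holds trivially, $A$ would be a cylinder, contradicting the hypothesis. The collision worry therefore disappears, and the proof is complete.
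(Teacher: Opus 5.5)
Your final argument is correct and is essentially the paper's proof: the map $\langle x,t\rangle\mapsto f_1^{\,t}(f_0(x))$ is exactly the paper's $e_t=a_1^t\circ a_0$, and your reduction of the case $n\geq 2$ to $2A\leq_1A$ via Lemma~\ref{lem:cancellation} with $P=2A$, $Q=A$, $Z=(n-1)A$ is the same as the paper's. The exploratory claim that $w\mapsto\sigma(x,w)$ is injective for every fixed $x$ is false in general (for instance when $f_0(x)=x$); your final proof does not use it, so you should simply delete it.
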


\begin{proof}
First note that $1A\equiv_1 A$, via the computable bijection
$x\mapsto\langle 0,x\rangle_1$.

For every $m\geq1$, the map
\[
  \langle i,x\rangle_m
  \longmapsto
  \langle i,x\rangle_{m+1}
\]
is a one-one reduction of $mA$ to $(m+1)A$.

Suppose first that
\[
  2A\leq_1A,
\]
and let $f$ be an injective reduction witnessing this. For $i<2$,
define
\[
  a_i(x)=f(\langle i,x\rangle_2).
\]
Then $a_0,a_1:\N\to\N$ are total computable colour-preserving
injections. Their ranges are disjoint, because $f$ is injective and
the two columns of $2A$ are disjoint.
For each $n\in\N$, put
\[
  e_n=a_1^n\circ a_0,
\]
where $a_1^0=\mathrm{id}_{\N}$.

Each $e_n$ is a total computable colour-preserving injection, uniformly
in $n$. The ranges of the maps $e_n$ are pairwise disjoint. Indeed,
suppose that $m<n$ and
\[
  e_m(x)=e_n(y).
\]
Then
\[
  a_1^m(a_0(x))
  =
  a_1^m\bigl(a_1^{n-m}(a_0(y))\bigr).
\]
Since $a_1^m$ is injective,
\[
  a_0(x)=a_1^{n-m}(a_0(y)).
\]
The left-hand side belongs to $\rng(a_0)$, while the right-hand side
belongs to $\rng(a_1)$ because $n-m\geq1$. This contradicts the
disjointness of the two ranges.

It follows that the map
\[
  \langle x,n\rangle
  \longmapsto
  e_n(x)
\]
is a total computable injection. Moreover,
\[
  \langle x,n\rangle\in\Cyl(A)
  \iff x\in A
  \iff e_n(x)\in A,
\]
so it is a one-one reduction of $\Cyl(A)$ to $A$. Conversely, the
map
\[
  x\longmapsto\langle x,0\rangle
\]
is always a one-one reduction of $A$ to $\Cyl(A)$. Hence
\[
  A\equiv_1\Cyl(A),
\]
contrary to the hypothesis.

Now suppose that
\[
  (m+1)A\leq_1mA
\]
for some $m\geq2$. Put $Z=(m-1)A$. Up to computable permutations
of the columns,
\[
  (m+1)A\equiv_1 2A\join Z,
  \qquad
  mA\equiv_1 A\join Z.
\]
Thus the assumed reduction yields
\[
  2A\join Z\leq_1A\join Z.
\]
The projection
\[
  \langle i,x\rangle_{m-1}\longmapsto x
\]
is a bounded finite-one reduction of $Z$ to $A$, and in particular
\[
  Z\leq_{\fo}A.
\]
Lemma~\ref{lem:cancellation}, applied with
\[
  P=2A,\qquad Q=A,\qquad Z=(m-1)A,
\]
therefore gives
\[
  2A\leq_1A,
\]
which, by the first part of the proof, would make $A$ a cylinder.
This is again a contradiction.

Consequently,
\[
  (m+1)A\nleq_1mA
\]
for every $m\geq1$. Since $mA\leq_1(m+1)A$, we conclude that
\[
  A<_12A<_13A<_1\cdots.
\]
\end{proof}

\begin{lemma}[Transfer of reservoirs]\label{lem:transfer-reservoirs}
If $A$ has computable reservoirs of both colours and
$A\leq_{\fo}B$, then $B$ has computable reservoirs of both colours.
\end{lemma}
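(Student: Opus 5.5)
Let $f$ be a finite-one reduction of $A$ to $B$, and let $R_1\subseteq A$ and $R_0\subseteq\overline A$ be computable infinite reservoirs. The plan is to push each reservoir forward along $f$ and then thin out the images to recover decidable sets. Since $f$ is a reduction, $f(R_1)\subseteq B$ and $f(R_0)\subseteq\overline B$. Since $f$ is finite-one and each $R_c$ is infinite, each image $f(R_c)$ is infinite. The images are c.e. (uniformly in $c$) but need not be computable, so we do not use them directly.

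Next we extract infinite computable subsets of these images, using the standard fact that every infinite c.e. set contains an infinite computable subset. Concretely, fix $c<2$ and enumerate $R_c$ increasingly as $r_c(0)<r_c(1)<\cdots$. Define a strictly increasing computable sequence $b_c(0)<b_c(1)<\cdots$ by letting $b_c(0)=f(r_c(0))$, and letting $b_c(n+1)=f(r_c(j))$ for the least $j$ such that $f(r_c(j))>b_c(n)$. Such a $j$ always exists: only finitely many numbers lie at or below $b_c(n)$, each has a finite fibre under $f$, and $R_c$ is infinite. Hence the search terminates and $b_c$ is total computable.

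Put $S_c=\rng(b_c)$. Because $b_c$ is strictly increasing, $S_c$ is computable: to decide whether $y\in S_c$, generate $b_c(0),b_c(1),\ldots$ until a value $\geq y$ appears. The set $S_c$ is infinite, and $S_c\subseteq f(R_c)$. Therefore $S_1\subseteq B$ and $S_0\subseteq\overline B$, so $S_1$ and $S_0$ are the required computable reservoirs of both colours for $B$.

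The only delicate point is that $f(R_c)$ itself need not be computable. The monotone thinning above handles this, and it relies on finite-one-ness exactly to guarantee that the search for a larger value always succeeds. No deeper obstacle is expected.
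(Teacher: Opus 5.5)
Your proof is correct and takes essentially the same approach as the paper. You push each reservoir forward along the finite-one reduction, use finite-one-ness to see that each image is infinite, and then extract an infinite computable subset through a strictly increasing computable enumeration; the only cosmetic difference is that you run the thinning along the increasing enumeration of $R_c$ rather than along an abstract enumeration of the c.e.\ set $f[R_c]$.
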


\begin{proof}
Let
\[
  R_1\subseteq A
  \qquad\text{and}\qquad
  R_0\subseteq\overline A
\]
be computable infinite reservoirs, and let
\[
  f:A\leq_{\fo}B.
\]
For $c<2$, put
\[
  W_c=f[R_c].
\]
Since $R_c$ is computable and $f$ is total computable, $W_c$ is
c.e. Moreover, the reduction property of $f$ gives
\[
  W_1\subseteq B
  \qquad\text{and}\qquad
  W_0\subseteq\overline B.
\]

Each $W_c$ is infinite. Indeed, if $W_c$ were finite, then
\[
  R_c
  \subseteq
  \bigcup_{y\in W_c}f^{-1}(y),
\]
which would be finite, since $W_c$ is finite and every fibre of
$f$ is finite. This contradicts the infinitude of $R_c$.

Every infinite c.e.\ set contains an infinite computable subset.
For completeness, fix $c<2$ and an enumeration of $W_c$.
Choose $w_c(0)$ by waiting for an element to be enumerated.
After $w_c(n)$ has been chosen, wait for an enumerated element
greater than $w_c(n)$ and choose it as $w_c(n+1)$.
Each search terminates because $W_c$ is infinite. Thus
\[
  w_c(0)<w_c(1)<\cdots
\]
is a total computable sequence of elements of $W_c$.

Its range
\[
  S_c=\{w_c(n):n\in\N\}
\]
is infinite and computable. Indeed, strict increase implies
$w_c(n)\geq n$, so
\[
  x\in S_c
  \iff
  (\exists n\leq x)\,[w_c(n)=x],
\]
which is a decidable condition. By construction, $S_c\subseteq W_c$.

Consequently,
\[
  S_1\subseteq B
  \qquad\text{and}\qquad
  S_0\subseteq\overline B,
\]
so $B$ has computable reservoirs of both colours.
\end{proof}

We next record the coarse equivalence shared by all finite multiples
along a weak dyadic tower. Although these sets will represent distinct
one-one degrees once noncylindricity is imposed, they all remain inside
the same bounded finite-one degree.

\begin{lemma}[Bounded equivalence of nonzero tower multiples]
\label{lem:scalar-bfo}

Let $(U_n)$ be a weak dyadic tower over $U=U_0$. Then, for every
$n\in\N$ and every $m\geq1$,
\[
  mU_n\equiv_{\bfo}U.
\]
\end{lemma}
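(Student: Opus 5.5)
We prove the two directions of $mU_n\equiv_{\bfo}U$ separately, by induction on $n$ applied to the tower equivalences, using only projections and column embeddings. The two elementary facts needed are that $\leq_{\bfo}$ is transitive and compatible with finite autojoins. For transitivity, if $f$ has fibres of size at most $k$ and $g$ has fibres of size at most $l$, then $g\circ f$ has fibres of size at most $kl$. For compatibility, if $f:A\leq_{\bfo}B$ with bound $k$, then $\langle i,x\rangle_m\mapsto\langle i,f(x)\rangle_m$ witnesses $mA\leq_{\bfo}mB$ with the same bound $k$. Every one-one reduction is a bounded finite-one reduction with bound $1$.

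\emph{Step 1: the case $n=0$.} The map $\langle i,x\rangle_m\mapsto x$ is a reduction of $mU$ to $U$, and each of its fibres has exactly $m$ elements, so $mU\leq_{\bfo}U$. In the other direction, $x\mapsto\langle 0,x\rangle_m$ is a one-one reduction of $U$ to $mU$. Hence $mU\equiv_{\bfo}U$ for every $m\geq1$.

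\emph{Step 2: descending the tower.} We show by induction on $n$ that $U_n\equiv_{\bfo}U$.
\begin{itemize}
\item The case $n=0$ is trivial.
\item For the inductive step, the weak dyadic tower gives $U_n\equiv_1 2U_{n+1}$. By the projection argument of Step 1, applied to $U_{n+1}$ in place of $U$, we get $2U_{n+1}\equiv_{\bfo}U_{n+1}$.
\item Composing, $U_{n+1}\equiv_{\bfo}2U_{n+1}\equiv_1U_n\equiv_{\bfo}U$, and transitivity yields $U_{n+1}\equiv_{\bfo}U$.
\end{itemize}
Alternatively, one can iterate the tower directly to obtain $U\equiv_1 2^nU_n$ and then project, which gives the same conclusion with an explicit bound $2^n$.

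\emph{Step 3: arbitrary $m$.} Applying Step 1 to $U_n$ gives $mU_n\equiv_{\bfo}U_n$. Combining this with Step 2 and transitivity gives $mU_n\equiv_{\bfo}U$.

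There is no real obstacle in this argument. The only point needing care is that every witness is an ordinary computable function. This holds because, for a fixed $n$, we compose only the finitely many tower equivalences at levels below $n$, each of which is a computable one-one reduction. No uniformity across levels is required, in accordance with Remark~\ref{rem:nonuniform}.
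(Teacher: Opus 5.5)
Your proof is correct and essentially the paper's argument: the paper also gets $rU_n\equiv_{\bfo}U_n$ from the projection and column inclusion, iterates the tower to obtain $U\equiv_1 2^nU_n$ (your ``alternative'' in Step~2), and concludes by transitivity. Your main inductive route, which passes through $U_n\equiv_{\bfo}U$ level by level, is only a cosmetic variant, and your check that fibre bounds multiply under composition makes explicit a transitivity step the paper uses without comment.
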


\begin{proof}
First observe that one-one equivalence is preserved under finite
autojoins. Indeed, if $f:A\leq_1B$ and $r\geq1$, then
\[
  \langle i,x\rangle_r
  \longmapsto
  \langle i,f(x)\rangle_r
\]
is a one-one reduction of $rA$ to $rB$.

We claim that, for every $n\in\N$,
\[
  U\equiv_1 2^nU_n.
\]
For $n=0$, this follows from $U_0=U$ and the computable
bijection $x\mapsto\langle 0,x\rangle_1$ witnessing
$U_0\equiv_1 1U_0$. Suppose that
\[
  U\equiv_1 2^nU_n.
\]
Since $U_n\equiv_12U_{n+1}$, applying the witnessing reductions
on each of the $2^n$ columns gives
\[
  2^nU_n\equiv_1 2^n(2U_{n+1})
  \equiv_1 2^{n+1}U_{n+1},
\]
where the last equivalence is induced by a computable permutation
of the columns. This proves the claim by induction. Notice that
only finitely many tower equivalences are used for each fixed $n$,
so no uniformity in the tower is required.

Now fix $r\geq1$. The map
\[
  x\longmapsto\langle0,x\rangle_r
\]
is a one-one reduction of $U_n$ to $rU_n$. Conversely, under the
fixed computable coding of
$\{0,\ldots,r-1\}\times\N$, the projection
\[
  \langle i,x\rangle_r\longmapsto x
\]
is a bounded finite-one reduction of $rU_n$ to $U_n$, with every
fibre having cardinality exactly $r$. Hence
\[
  rU_n\equiv_{\bfo}U_n
  \qquad(r\geq1).
\]

Taking $r=2^n$ and using
\[
  U\equiv_12^nU_n,
\]
we obtain
\[
  U\equiv_{\bfo}U_n.
\]
Taking instead $r=m$ gives
\[
  mU_n\equiv_{\bfo}U_n.
\]
By transitivity of bounded finite-one reducibility,
\[
  mU_n\equiv_{\bfo}U,
\]
as required.
\end{proof}

The dyadic tower now admits a natural numerical parametrization.
Passing from $U_n$ to $U_{n+1}$ divides the corresponding scale by
two, while taking a finite autojoin multiplies it by an integer.
Noncylindricity ensures that these numerical values faithfully record
one-one reducibility, and binary join will correspond to addition.

Let
\[
  \D_{>0}=\left\{\frac{m}{2^n}:m\geq1,\ n\in\N\right\}
\]
be the set of positive dyadic rationals.

\begin{lemma}[Scalar calculus]\label{lem:scalar-calculus}
Let $(U_n)$ be a weak dyadic tower over a noncylindrical set $U=U_0$.
For each $q\in\D_{>0}$, choose a representation
$q=m/2^n$ with $m\geq1$ and $n\in\N$, and put
\[
  S_q=mU_n.
\]
Then the one-one degree of $S_q$ is independent of the chosen
representation, and, for all $q,r\in\D_{>0}$,
\begin{equation}\label{eq:scalar-order}
  S_q\leq_1S_r \iff q\leq r,
\end{equation}
\begin{equation}\label{eq:scalar-addition}
  S_q\join S_r\equiv_1S_{q+r}.
\end{equation}
\end{lemma}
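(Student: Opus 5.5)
The plan is to reduce everything to a common level and then use Lemma~\ref{lem:strict-autojoins}, transferred from $U$ to each level $U_n$.

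\emph{Step 1: common level.} From $U_n\equiv_1 2U_{n+1}$, applying the witnessing reductions columnwise (as in the proof of Lemma~\ref{lem:scalar-bfo}) and permuting columns computably, we get
\[
  mU_n\equiv_1 m(2U_{n+1})\equiv_1 (2m)U_{n+1}.
\]
Iterating finitely many times gives $mU_n\equiv_1 (2^{\ell-n}m)U_\ell$ for every $\ell\geq n$. If $m/2^n=m'/2^{n'}$, take $\ell=\max(n,n')$. Both representations then become the same integer multiple $(2^\ell q)U_\ell$, which proves independence of the representation. Only finitely many tower equivalences are used, so no uniformity is needed.

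\emph{Step 2: addition.} For \eqref{eq:scalar-addition}, write $q=a/2^\ell$ and $r=b/2^\ell$ at a common level. Then
\[
  S_q\join S_r\equiv_1 aU_\ell\join bU_\ell=(a+b)U_\ell=S_{q+r},
\]
using the remark that $mA\join nA=(m+n)A$ up to a computable permutation.

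\emph{Step 3: order.} For \eqref{eq:scalar-order}, again pass to a common level $\ell$, so the claim becomes $aU_\ell\leq_1 bU_\ell\iff a\leq b$. If $a\leq b$, the column inclusion gives the reduction. For the converse, I need $U_\ell$ to be noncylindrical, so that Lemma~\ref{lem:strict-autojoins} applies: it gives $(b+1)U_\ell\nleq_1 bU_\ell$, and $(b+1)U_\ell\leq_1 aU_\ell$ whenever $a>b$. So $a>b$ implies $aU_\ell\nleq_1 bU_\ell$.

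\emph{Step 4: noncylindricity propagates up the tower.} This is the main obstacle: showing that if $U_\ell$ were a cylinder, then $U$ would be one. Since $U\equiv_1 2^\ell U_\ell$, it suffices to show that a finite autojoin of a cylinder is a cylinder. I would argue directly. If $\Cyl(A)\leq_1 A$ via $f$, then
\[
  \langle\langle i,x\rangle,t\rangle\longmapsto\langle i,f(\langle x,t\rangle)\rangle
\]
is a one-one reduction of $\Cyl(mA)$ to $mA$. Hence $mA$ is a cylinder, and since $\Cyl$ respects $\equiv_1$, so is anything one-one equivalent to it. This contradicts the hypothesis that $U$ is noncylindrical.

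Alternatively, one can avoid cylinders entirely. If $2U_\ell\leq_1 U_\ell$, then applying the reductions columnwise gives $2\cdot 2^\ell U_\ell\leq_1 2^\ell U_\ell$, that is, $2U\leq_1 U$, contradicting Lemma~\ref{lem:strict-autojoins} for $U$. The first part of that lemma's proof shows that $2A\leq_1A$ already suffices to make $A$ a cylinder, so this route recovers the general strictness at level $\ell$ as well.
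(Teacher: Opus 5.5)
Your proposal is correct and follows essentially the same route as the paper: pass to a common level using finitely many tower equivalences, then use column inclusion, the column permutation $aU_\ell\join bU_\ell\equiv_1(a+b)U_\ell$, and Lemma~\ref{lem:strict-autojoins} at level $\ell$ once noncylindricity has been transferred from $U$ to $U_\ell$. The only difference is cosmetic and lies in that transfer: the paper observes that a cylinder $U_\ell$ would satisfy $2^\ell U_\ell\equiv_1U_\ell$, so $U\equiv_1U_\ell$ would be a cylinder, whereas you show directly that $\Cyl(A)\leq_1A$ implies $\Cyl(mA)\leq_1mA$ (or, alternatively, push $2U_\ell\leq_1U_\ell$ up to $2U\leq_1U$); both versions are valid.
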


\begin{proof}
First observe that finite autojoins preserve one-one equivalence.
Indeed, if $f:A\leq_1B$ and $a\geq1$, then
\[
  \langle i,x\rangle_a
  \longmapsto
  \langle i,f(x)\rangle_a
\]
is a one-one reduction of $aA$ to $aB$.

Consequently, if $N\geq n$, then iterating the finitely many tower
equivalences between levels $n$ and $N$ gives
\[
  mU_n\equiv_1m2^{N-n}U_N.
\]
Only finitely many witnesses are used for each fixed $n$ and $N$,
so no uniformity in the tower is required.

We next show that every $U_N$ is noncylindrical. Cylinderhood is
invariant under one-one equivalence. Indeed, if $A\equiv_1B$ and
$\Cyl(B)\leq_1B$, then a one-one reduction $f:A\leq_1B$ induces
\[
  \Cyl(A)\leq_1\Cyl(B)
\]
by
\[
  \langle x,t\rangle\longmapsto\langle f(x),t\rangle.
\]
Hence
\[
  \Cyl(A)\leq_1\Cyl(B)\leq_1B\leq_1A.
\]
Since the canonical map
\[
  x\longmapsto\langle x,0\rangle
\]
always gives $A\leq_1\Cyl(A)$, the set $A$ is a cylinder.

Suppose now that $U_N$ were a cylinder. For every $a\geq1$, the map
\[
  \langle i,x\rangle_a
  \longmapsto
  \langle x,i\rangle
\]
is a one-one reduction of $aU_N$ to $\Cyl(U_N)$. Thus
\[
  aU_N\leq_1\Cyl(U_N)\leq_1U_N,
\]
while the inclusion into one column gives
\[
  U_N\leq_1aU_N.
\]
Therefore $aU_N\equiv_1U_N$ for every $a\geq1$. In particular,
\[
  2^NU_N\equiv_1U_N.
\]
Since the tower gives
\[
  U\equiv_12^NU_N,
\]
we would have $U\equiv_1U_N$, and the invariance of cylinderhood
would make $U$ a cylinder, a contradiction. Hence every $U_N$ is
noncylindrical.

We now prove that the degree of $S_q$ is independent of the chosen
representation of $q$. Suppose
\[
  q=\frac{m}{2^n}=\frac{m'}{2^{n'}}.
\]
Choose $N\geq n,n'$. Then
\[
  m2^{N-n}=m'2^{N-n'},
\]
and hence
\[
  mU_n
  \equiv_1m2^{N-n}U_N
  =
  m'2^{N-n'}U_N
  \equiv_1m'U_{n'}.
\]
Thus the one-one degree of $S_q$ depends only on $q$.

Next let
\[
  q=\frac{m}{2^n},
  \qquad
  r=\frac{k}{2^\ell},
\]
choose $N\geq n,\ell$, and put
\[
  a=m2^{N-n},
  \qquad
  b=k2^{N-\ell}.
\]
Then
\begin{equation}\label{eq:common-level-representation}
  S_q\equiv_1aU_N,
  \qquad
  S_r\equiv_1bU_N.
\end{equation}
Moreover,
\[
  q\leq r\iff a\leq b.
\]

Since $U_N$ is noncylindrical, Lemma~\ref{lem:strict-autojoins}
implies
\[
  aU_N\leq_1bU_N
  \iff
  a\leq b.
\]

Indeed, if $a\leq b$, inclusion of columns gives
$aU_N\leq_1bU_N$.

If $a>b$ and $aU_N\leq_1bU_N$, then
\[
  (b+1)U_N\leq_1aU_N\leq_1bU_N,
\]
contradicting
\[
  bU_N<_1(b+1)U_N.
\]
Together with (\ref{eq:common-level-representation}), this proves
\[
  S_q\leq_1S_r\iff q\leq r.
\]
One-one equivalence is also preserved under binary joins,
by applying the corresponding reductions separately on the
two columns.
Finally,
\[
\begin{aligned}
  S_q\join S_r
  &\equiv_1 aU_N\join bU_N\\
  &\equiv_1(a+b)U_N,
\end{aligned}
\]
where the last equivalence is induced by a computable permutation of
the columns. Since
\[
  q+r=\frac{a+b}{2^N},
\]
the well-definedness already proved gives
\[
  (a+b)U_N\equiv_1S_{q+r}.
\]
Therefore
\[
  S_q\join S_r\equiv_1S_{q+r}.
\]
\end{proof}

We can now combine the preceding ingredients. The base absorption
property initially controls only joins $U\join X$ with $X\leq_1U$;
cancellation will propagate this control throughout the dyadic tower.
An arbitrary bounded finite-one reduction can then be decomposed into
finitely many injective pieces, allowing the scalar calculus to be
applied successively. The resulting scalar increment will finally
identify the one-one degree of the original set.

\begin{theorem}[Absorption-to-Exhaustivity]
\label{thm:absorption-exhaustivity}
Let $U=U_0,U_1,\ldots$ satisfy the following conditions:
\begin{enumerate}[label=\textup{(\roman*)}]
\item $(U_n)$ is a weak dyadic tower;
\item $U$ is not a cylinder;
\item $U$ has computable reservoirs of both colours;
\item $(U,(U_n))$ has the base absorption property.
\end{enumerate}
Then every $B\equiv_{\bfo}U$ is one-one equivalent to $mU_n$ for some
$m\geq1$ and $n\in\N$. Consequently,
\begin{equation}\label{eq:degree-order-isomorphism}
  \left(
    \{[B]_1:B\equiv_{\bfo}U\},\leq_1
  \right)
  \cong
  (\D_{>0},\leq).
\end{equation}
\end{theorem}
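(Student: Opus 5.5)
Fix $B\equiv_{\bfo}U$. The plan is to compute the one-one degree of $U\join B$ through base absorption and the scalar calculus, and then cancel the summand $U$ using Lemma~\ref{lem:cancellation}.

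\textbf{Step 1: decompose $B$ into injective pieces.} Let $f:B\leq_{\bfo}U$ have fibre bound $k$. For $i<k$ put
\[
  D_i=\{x:|\{z<x:f(z)=f(x)\}|=i\}.
\]
These sets are computable, they partition $\N$, and $f$ is injective on each $D_i$. Nonuniformly, separate the infinite $D_i$ from the finite ones. For each infinite $D_i$, the increasing enumeration $\pi_i:\N\to D_i$ is a computable bijection, and $X_i=\pi_i^{-1}(B)$ satisfies $X_i\leq_1U$ via $f\circ\pi_i$. Gluing the $\pi_i$ column by column gives
\[
  B\equiv_1 X_{i_1}\join\cdots\join X_{i_t}\join C
\]
with $C$ computable; $C$ is finite or empty, and is empty if no $D_i$ is finite. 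Then $U\join C\equiv_1U$ by Lemma~\ref{lem:reservoir-absorption}(i). This follows as in Lemma~\ref{lem:remainder} and uses the reservoirs of $U$. Hence
\[
  U\join B\equiv_1 U\join X_{i_1}\join\cdots\join X_{i_t}.
\]

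\textbf{Step 2: absorb the pieces one at a time.} Suppose we already know $U\join X_{i_1}\join\cdots\join X_{i_{j-1}}\equiv_1S_s$ with $s\geq1$. If $s>1$, then by (\ref{eq:scalar-addition}) we have $S_s\equiv_1U\join S_{s-1}$, since $S_1=U$. Base absorption gives $U\join X_{i_j}\equiv_1S_r$ for some $r\in\D_{>0}$, and $r\geq1$ because $U\leq_1U\join X_{i_j}$ together with (\ref{eq:scalar-order}). Therefore
\[
  S_s\join X_{i_j}\equiv_1S_{s-1}\join S_r\equiv_1S_{s-1+r}.
\]
The case $s=1$ is base absorption itself. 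By induction we obtain $U\join B\equiv_1S_s$ for some $s\geq1$.

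\textbf{Step 3: cancel $U$.} I expect the main obstacle to be excluding $s=1$. Suppose $U\join B\leq_1U$. Joining $B$ to both sides repeatedly gives $U\join lB\leq_1U$ for every $l\geq 1$. Now let $g:U\leq_{\bfo}B$ have fibre bound $l$. The map $x\mapsto\langle\text{rank of }x\text{ in its }g\text{-fibre},\,g(x)\rangle_l$ witnesses $U\leq_1lB$. Hence
\[
  2U\leq_1U\join lB\leq_1U,
\]
contradicting Lemma~\ref{lem:strict-autojoins}.

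So $s>1$, and we have $B\join U\equiv_1S_{s-1}\join U$. Lemma~\ref{lem:scalar-bfo} gives $U\equiv_{\bfo}S_{s-1}$, so $U\leq_{\fo}S_{s-1}$. Also $U\leq_{\fo}B$ by hypothesis. Applying Lemma~\ref{lem:cancellation} in both directions then yields $B\equiv_1S_{s-1}$, which is of the form $mU_n$.

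\textbf{Step 4: the isomorphism (\ref{eq:degree-order-isomorphism}).} Map $q\in\D_{>0}$ to $[S_q]_1$. This is well defined and an order embedding by Lemma~\ref{lem:scalar-calculus}, which applies because $U$ is noncylindrical. Its values lie in $[U]_{\bfo}$ by Lemma~\ref{lem:scalar-bfo}. Steps 1 to 3 show that every member of $[U]_{\bfo}$ is hit, so the map is surjective.
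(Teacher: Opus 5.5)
Your proof is correct, and it reaches the paper's final cancellation step by a shorter path.

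\textbf{The paper's route.} The paper first propagates base absorption down the tower, using two applications of Lemma~\ref{lem:cancellation} to show $U_n\join X\equiv_1 S_{q-z_n}$. This makes $S_q\join X$ a scalar for \emph{every} $q\in\D_{>0}$. It then defines $T_B(q)$ by $S_q\join B\equiv_1 S_{T_B(q)}$, proves the translation law $T_B(q)=q+\delta(B)$, and obtains $\delta(B)\geq 1/h$ from iterated increments.

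\textbf{Your route.} You observe that none of this is needed for the theorem. Every intermediate join contains $U$, so the running scalar $s$ is always at least $1$. When $s>1$ you can split $S_s\equiv_1 U\join S_{s-1}$ and hand the next piece directly to base absorption. To rule out $s=1$ you only need the trivial iteration $U\join B\leq_1 U\Rightarrow U\join lB\leq_1 U$, not a constant-increment law. You also absorb the finite remainder using the reservoirs of $U$. This avoids transferring reservoirs to $B$ via Lemma~\ref{lem:transfer-reservoirs} and proving $B\equiv_1 J$. As a result, Lemma~\ref{lem:cancellation} is invoked directly only at the end, and Lemma~\ref{lem:transfer-reservoirs} is not needed at all.

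\textbf{What the paper's route buys.} It gives extra structure that yours does not. Absorption holds at every level of the tower. Moreover, $B$ acts on all scalars as translation by $\delta(B)$, i.e.\ $S_q\join B\equiv_1 S_{q+\delta(B)}$ for every $q$, with the explicit bound $\delta(B)\geq 1/h$.

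\textbf{One small imprecision.} When the finite part $F$ is nonempty, gluing the $\pi_i$ does not literally give $B\equiv_1 X_{i_1}\join\cdots\join X_{i_t}\join C$, because a finite set cannot fill a full column. What gluing gives is $B\leq_1 J\join C$ and $J\leq_1 B$, where $J$ is the join of the $X_{i_j}$. These two reductions, together with $U\join C\equiv_1 U$, already yield $U\join B\equiv_1 U\join J$, which is all you use.
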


\begin{proof}
We divide the proof into four steps.
Throughout the proof, saying that a set is a scalar means that
it is one-one equivalent to $S_q$ for some $q\in\D_{>0}$.
Recall that
\[
  U\equiv_1 S_1
  \qquad\text{and}\qquad
  U_n\equiv_1 S_{2^{-n}}
  \quad(n\in\N).
\]

\medskip
\noindent
\textbf{Step 1: absorption propagates from $U$ to every scalar.}
Let $X\leq_1U$. By base absorption,
\begin{equation}\label{eq:base-absorption-scalar}
  U\join X\equiv_1S_q
\end{equation}
for some $q\in\D_{>0}$. Since $U\leq_1U\join X$, scalar calculus gives
$q\geq1$.

Fix $n>0$ and put
\[
  Z_n=(2^n-1)U_n.
\]
Then
\[
  U\equiv_1Z_n\join U_n,
\]
and $Z_n$ has scalar value
\[
  z_n=1-2^{-n}.
\]
From (\ref{eq:base-absorption-scalar}),
\begin{equation}\label{eq:propagation-cancellation-setup}
  Z_n\join(U_n\join X)
  \equiv_1
  S_q
  \equiv_1
  Z_n\join S_{q-z_n}.
\end{equation}

Here $q-z_n\in\D_{>0}$, since $q\geq1$ and
$z_n=1-2^{-n}$. After pre- and post-composing the forward
reduction in (\ref{eq:propagation-cancellation-setup}) with the
canonical computable permutations witnessing associativity and
commutativity of finite joins, we obtain
\[
  (U_n\join X)\join Z_n
  \leq_1
  S_{q-z_n}\join Z_n.
\]
Apply Lemma~\ref{lem:cancellation} with
\[
  P=U_n\join X,\qquad Q=S_{q-z_n},\qquad Z=Z_n.
\]
Since $n>0$, the set
\[
   Z_n=(2^n-1)U_n
\]
is a nonzero scalar, and since $q-z_n>0$, the set
$S_{q-z_n}$ is also a nonzero scalar. By
Lemma~\ref{lem:scalar-bfo},
\[
   Z_n\leq_{\bfo}U\leq_{\bfo}S_{q-z_n}.
\]
By transitivity,
\[
   Z_n\leq_{\bfo}S_{q-z_n},
\]
and hence
\[
   Z_n\leq_{\fo}S_{q-z_n}.
\]
Lemma~\ref{lem:cancellation} therefore gives
\[
   U_n\join X\leq_1S_{q-z_n}.
\]

For the reverse direction, after applying the corresponding
computable permutations to the reverse reduction in
(\ref{eq:propagation-cancellation-setup}), we obtain
\[
  S_{q-z_n}\join Z_n
  \leq_1
  (U_n\join X)\join Z_n.
\]
Apply Lemma~\ref{lem:cancellation} with
\[
  P=S_{q-z_n},\qquad Q=U_n\join X,\qquad Z=Z_n.
\]
The projection gives
\[
  Z_n\leq_{\fo}U_n,
\]
and the canonical inclusion gives
\[
  U_n\leq_1U_n\join X.
\]
Hence, by transitivity,
\[
  Z_n\leq_{\fo}U_n\join X.
\]
A second application of cancellation yields
\[
  S_{q-z_n}\leq_1U_n\join X.
\]

Therefore
\begin{equation}\label{eq:propagated-absorption}
  U_n\join X\equiv_1S_{q-z_n}.
\end{equation}

For $n=0$, the conclusion
\[
  U_0\join X\equiv_1S_q
\]
is exactly (\ref{eq:base-absorption-scalar}). Therefore, for every
$n\in\N$, the set $U_n\join X$ is a scalar.

It follows that, for every scalar $S_r=mU_n$ and every
$X\leq_1U$, the set $S_r\join X$ is a scalar. If $m=1$, use the
preceding conclusion. If $m>1$, write, up to a computable
permutation of the summands,
\[
  mU_n\join X
  \equiv_1
  (m-1)U_n\join(U_n\join X),
\]
and apply scalar addition.

\medskip
\noindent
\textbf{Step 2: finite-rank decomposition of a bounded reduction.}
Let $B\equiv_{\bfo}U$, and fix a reduction
\[
  f:B\leq_{\bfo}U
\]
whose fibres have size at most some $k\geq1$. Define
\begin{equation}\label{eq:rank-partition}
  D_i=\{x:|\{z<x:f(z)=f(x)\}|=i\},
  \qquad i<k.
\end{equation}
The sets $D_i$ are computable, form a partition of $\N$, and
$f\upharpoonright D_i$ is injective.

We now make a finite nonuniform choice. Fix
\[
  I=\{i<k:D_i\text{ is infinite}\}
\]
and let
\[
  F=\bigcup_{i\notin I}D_i.
\]
Since the $D_i$ form a finite partition of $\N$, the set $I$ is nonempty
and $F$ is finite. The set $I$, the finite set $F$, and the finitely many
membership values of $B$ on $F$ need not be obtained uniformly from an
index of $f$; they are now fixed and may be hardcoded into the reductions
below.

For each $i\in I$, let $d_i:\N\to D_i$ be the computable increasing
bijection and put
\[
  X_i=\{x:d_i(x)\in B\}.
\]
Then $X_i\leq_1U$, witnessed by $f\circ d_i$. Since
$U\leq_{\bfo}B$, Lemma~\ref{lem:transfer-reservoirs} shows that $B$
has reservoirs of both colours.

Write
\[
  I=\{i_0<i_1<\cdots<i_{\ell-1}\},
\]
where $\ell=|I|\geq1$, and put
\[
  J=\bigoplus_{a<\ell}X_{i_a}
  :=
  \{\langle a,x\rangle_\ell:
       a<\ell\ \&\ x\in X_{i_a}\}.
\]
Define
\[
  \pi_J:\N\longrightarrow\N\setminus F
\]
by
\[
  \pi_J(\langle a,x\rangle_\ell)=d_{i_a}(x)
  \qquad(a<\ell,\ x\in\N).
\]
Since the sets $D_{i_a}$ are pairwise disjoint and
\[
  \N\setminus F=\bigcup_{a<\ell}D_{i_a},
\]
the map $\pi_J$ is a computable bijection from $\N$ onto
$\N\setminus F$. Moreover,
\[
\begin{aligned}
  \langle a,x\rangle_\ell\in J
  &\iff x\in X_{i_a}\\
  &\iff d_{i_a}(x)\in B\\
  &\iff \pi_J(\langle a,x\rangle_\ell)\in B.
\end{aligned}
\]

Removing $F$ from the two reservoirs of $B$ and taking their
inverse images under $\pi_J$ gives computable reservoirs of both
colours in $J$. The set
\[
  D=\N\setminus F
\]
is computable, and
\[
  B\cap\overline D=B\cap F
\]
is computable from the hardcoded membership values. Apply
Lemma~\ref{lem:remainder} with
\[
  A=J,\qquad B=B,\qquad D=\N\setminus F,\qquad \pi=\pi_J.
\]
We obtain
\begin{equation}\label{eq:bounded-rank-decomposition}
  B\equiv_1J=\bigoplus_{a<\ell}X_{i_a}.
\end{equation}

Fix $q\in\D_{>0}$. Starting with $S_q$, apply Step~1
successively to
\[
  X_{i_0},\ldots,X_{i_{\ell-1}}.
\]

Thus $S_q\join J$ is a scalar. Since
$B\equiv_1J$, it follows that $S_q\join B$ is a scalar as well.
Therefore there is a unique $T_B(q)\in\D_{>0}$ such that
\begin{equation}\label{eq:translation-map-definition}
  S_q\join B\equiv_1S_{T_B(q)}.
\end{equation}
Uniqueness follows from Lemma~\ref{lem:scalar-calculus}.

\medskip
\noindent
\textbf{Step 3: the increment is independent of the initial scalar.}
For $q,r\in\D_{>0}$,
\[
\begin{aligned}
  S_{q+r}\join B
  &\equiv_1 S_q\join(S_r\join B)\\
  &\equiv_1 S_q\join S_{T_B(r)}\\
  &\equiv_1 S_{q+T_B(r)}.
\end{aligned}
\]
Thus
\begin{equation}\label{eq:translation-law}
  T_B(q+r)=q+T_B(r).
\end{equation}
Interchanging $q$ and $r$ gives
\[
  T_B(q)-q=T_B(r)-r.
\]

Hence there is a dyadic rational $\delta(B)$ such that
\begin{equation}\label{eq:constant-increment}
  T_B(q)=q+\delta(B)
  \qquad(q\in\D_{>0}).
\end{equation}
Since $S_q\leq_1S_q\join B$, scalar calculus gives
$T_B(q)\geq q$, and hence $\delta(B)\geq0$.

The increment is in fact positive.
Choose a bounded finite-one reduction
$g:U\leq_{\bfo}B$ with fibre bound $h\geq1$.
Define
\[
  r_g(x)=|\{z<x:g(z)=g(x)\}|.
\]
Then $r_g(x)<h$, and the map
\[
  x\longmapsto\langle r_g(x),g(x)\rangle_h
\]
is a total computable injective reduction of $U$ to $hB$. Thus
\begin{equation}\label{eq:rank-tagged-embedding}
  U\leq_1hB.
\end{equation}
Therefore
\[
  2U\leq_1U\join hB.
\]
We claim that, for every $j\geq1$ and every $q\in\D_{>0}$,

\begin{equation}\label{eq:iterated-increment}
   S_q\join jB\equiv_1S_{q+j\delta(B)}.
\end{equation}

For $j=1$, this follows from (\ref{eq:translation-map-definition})
and (\ref{eq:constant-increment}). Suppose that the
claim holds for some $j\geq1$. Then, using associativity of the
join, the induction hypothesis, and again (\ref{eq:translation-map-definition})
and (\ref{eq:constant-increment}), we have
\[
\begin{aligned}
   S_q\join (j+1)B
   &\equiv_1 (S_q\join jB)\join B\\
   &\equiv_1 S_{q+j\delta(B)}\join B\\
   &\equiv_1 S_{q+(j+1)\delta(B)}.
\end{aligned}
\]
Here $q+j\delta(B)\in\D_{>0}$, since $q\in\D_{>0}$ and $\delta(B)\geq0$ is dyadic.
This proves the claim by induction.
Taking $q=1$ and $j=h$, we obtain
\[
  U\join hB\equiv_1S_{1+h\delta(B)}.
\]
Scalar calculus now yields
\[
  2\leq1+h\delta(B),
\]
so
\begin{equation}\label{eq:positive-increment}
  \delta(B)\geq\frac1h>0.
\end{equation}

\medskip
\noindent
\textbf{Step 4: cancellation recovers $B$.}
By (\ref{eq:constant-increment}),
\begin{equation}\label{eq:final-common-summand}
  U\join B
  \equiv_1
  S_{1+\delta(B)}
  \equiv_1
  U\join S_{\delta(B)}.
\end{equation}

After computably interchanging the two summands on both sides of
the forward reduction in (\ref{eq:final-common-summand}), we obtain
\[
  B\join U
  \leq_1
  S_{\delta(B)}\join U.
\]
Apply Lemma~\ref{lem:cancellation} with
\[
  P=B,\qquad Q=S_{\delta(B)},\qquad Z=U.
\]
By Lemma~\ref{lem:scalar-bfo}, $U\leq_{\bfo}S_{\delta(B)}$, and hence
$U\leq_{\fo}S_{\delta(B)}$. We obtain
\[
  B\leq_1S_{\delta(B)}.
\]

For the reverse direction, after computably interchanging the
summands on both sides of the reverse reduction in
(\ref{eq:final-common-summand}), we obtain
\[
  S_{\delta(B)}\join U
  \leq_1
  B\join U.
\]
Apply Lemma~\ref{lem:cancellation} with
\[
  P=S_{\delta(B)},\qquad Q=B,\qquad Z=U.
\]
Since $B\equiv_{\bfo}U$, there is a bounded finite-one reduction
\[
  U\leq_{\bfo}B,
\]
which is in particular finite-one. Thus
\[
  U\leq_{\fo}B,
\]
and the lemma gives
\[
  S_{\delta(B)}\leq_1B.
\]

Thus
\[
  B\equiv_1S_{\delta(B)}=mU_n
\]
for suitable $m\geq1$ and $n$.

Every $mU_n$ is bounded finite-one equivalent to $U$, so the map
\[
  \frac{m}{2^n}\longmapsto[mU_n]_1
\]
is onto the set of one-one degrees in the bounded finite-one degree of
$U$. By Lemma~\ref{lem:scalar-calculus}, it is well defined, injective,
and order preserving in both directions. This proves (\ref{eq:degree-order-isomorphism}).
\end{proof}

\section{The dense degree}

The two preceding theorems now fit together directly. The block
construction supplies a set satisfying the hypotheses of the abstract
exhaustivity theorem, while the scalar calculus identifies the resulting
one-one degrees with the positive dyadic rationals. It remains only to
observe that this order is the countable dense linear order without
endpoints.

\begin{corollary}\label{cor:main}
There exists a bounded finite-one degree such that the one-one
degrees contained in it, ordered by $\leq_1$, form a countable
dense linear order without endpoints. More precisely, there is
a set $U\leq_T\emptyset''$ such that

\[
  \left(
    \{[B]_1:B\equiv_{\bfo}U\},\leq_1
  \right)
  \cong
  (\D_{>0},\leq)
  \cong
  (\mathbb Q,\leq).
\]
\end{corollary}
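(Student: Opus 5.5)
The plan is to obtain the corollary by composing Theorem~\ref{thm:block-homogenization} with Theorem~\ref{thm:absorption-exhaustivity}, and then to add the purely order-theoretic identification of $(\D_{>0},\leq)$ with $(\mathbb Q,\leq)$.

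First, take the set $U\leq_T\emptyset''$ and the sequence $(U_e)$ given by Theorem~\ref{thm:block-homogenization}. I will check the four hypotheses of Theorem~\ref{thm:absorption-exhaustivity} one at a time.
\begin{enumerate}[label=\textup{(\alph*)}]
\item $(U_e)$ is a weak dyadic tower with $U_0=U$; this is item~(i) of the construction.
\item $U$ is not a cylinder; this is item~(ii).
\item $U=U_0$ has computable reservoirs of both colours; this is item~(iii) at $e=0$.
\item The base absorption property holds. Let $X\leq_1U$ be witnessed by a total computable injection $\varphi$. Then $X=\varphi^{-1}(U)$. Since the list $(\varphi_e)$ contains every total computable injection, $\varphi=\varphi_e$ for some $e$. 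Equation~(\ref{eq:level-absorption}) then gives $U\join X\equiv_1(2^e+k_e)U_e$, and $2^e+k_e\geq1$.
\end{enumerate}
Theorem~\ref{thm:absorption-exhaustivity} therefore yields the isomorphism
\[
  \left(\{[B]_1:B\equiv_{\bfo}U\},\leq_1\right)\cong(\D_{>0},\leq).
\]
The bounded finite-one degree in the statement is $[U]_{\bfo}$.

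It remains to show $(\D_{>0},\leq)\cong(\mathbb Q,\leq)$. By Cantor's back-and-forth theorem, any countable dense linear order without endpoints is isomorphic to $(\mathbb Q,\leq)$, so it suffices to verify these properties for $\D_{>0}$.
\begin{itemize}
\item Countability and linearity are clear, since $\D_{>0}$ is a subset of $\mathbb Q$.
\item Density: if $q<r$ are in $\D_{>0}$, then $(q+r)/2$ lies strictly between them and is again a positive dyadic rational.
\item No greatest element: $q+1>q$.
\item No least element: $q/2<q$, and $q/2$ is still a positive dyadic rational.
\end{itemize}

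There is no real obstacle here. The only step that needs care is the base-absorption check: the $X$ in Definition~\ref{def:BA} must be matched to some listed $\varphi_e$, and this works because the list is exhaustive up to extensional repetition.
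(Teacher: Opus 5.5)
Your proposal is correct and follows the paper's proof essentially verbatim: take $U$ and $(U_e)$ from Theorem~\ref{thm:block-homogenization}, feed its properties into Theorem~\ref{thm:absorption-exhaustivity}, and identify $(\D_{>0},\leq)$ with $(\mathbb Q,\leq)$ by Cantor's theorem. Your explicit derivation of base absorption from item~(iv), by writing $X=\varphi^{-1}(U)$ and matching $\varphi$ to some $\varphi_e$, only unpacks the final line of the construction's proof, and using $q+1$ rather than $2q$ to rule out a greatest element makes no difference.
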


\begin{proof}

Let $U\leq_T\emptyset''$ and $(U_e)_{e\in\N}$ be supplied by
Theorem~\ref{thm:block-homogenization}.
Then $(U_e)_{e\in\N}$
is a weak dyadic tower over $U$, the set $U$ is noncylindrical and
has computable reservoirs of both colours, and
$(U,(U_e)_{e\in\N})$ has the base absorption property. Hence
Theorem~\ref{thm:absorption-exhaustivity} gives
\[
  \left(
    \{[B]_1:B\equiv_{\bfo}U\},\leq_1
  \right)
  \cong
  (\D_{>0},\leq).
\]

The set $\D_{>0}$ is countable and linearly ordered by the usual
order. If $q,r\in\D_{>0}$ and $q<r$, then
\[
  q<\frac{q+r}{2}<r,
\]
and $(q+r)/2\in\D_{>0}$, so the order is dense. Moreover, for every
$q\in\D_{>0}$,
\[
  0<\frac q2<q<2q,
\]
so the order has neither a least nor a greatest element. By Cantor's
characterization of the countable dense linear orders without
endpoints,
\[
  (\D_{>0},\leq)\cong(\mathbb Q,\leq).
\]
\end{proof}

\begin{remark}

The preceding corollary combines two deliberately separated parts of the
argument. The global construction and its effectivity analysis relative to
$\emptyset''$ are confined to Theorem~\ref{thm:block-homogenization} and its
preparatory lemmas.

By contrast, Theorem~\ref{thm:absorption-exhaustivity} is an
abstract reduction-theoretic argument using only a weak dyadic
tower, noncylindricity, computable reservoirs, base absorption,
scalar calculus, finite-rank decomposition, and finite-one
cancellation. Its proof includes finite nonuniform choices and
effective constructions of reductions, but every resulting
reduction is an ordinary computable function. Whenever tower
witnesses are used, only finitely many levels, fixed for the
reduction as a whole, are involved.

No oracle construction or computable uniform family of tower
witnesses is required. Nor is a uniform procedure for selecting
all the finite nonuniform data asserted. This separation is
intended to facilitate independent verification of the proof.
\end{remark}

\section{Conclusion}

We have answered Open Question~3 of Richter, Stephan, and Zhang
\cite{RSZ2026} affirmatively. For a noncomputable set
$U\leq_T\emptyset''$, the one-one degrees in $[U]_{\bfo}$ are
exactly those represented by the finite autojoins $mU_n$ of
the levels of a weak dyadic tower, ordered according to the
positive dyadic values $m/2^n$. The full internal order is
therefore isomorphic to $(\mathbb Q,\leq)$, rather than merely
containing a copy of it. Together with the $m$-rigidity obstruction
in \cite{CintioliRigidity}, this shows that the class of sets whose
bounded finite-one degree has this exact internal order type is
nonempty but null and meager in Cantor space.

A further question suggested by the present construction is
whether such an exact dense realization can be obtained with
a representative $U\leq_T\emptyset'$. It would also be
interesting to develop versions of the absorption-to-exhaustivity
principle for other families of representatives, with the aim
of obtaining further exact realizations.

\section*{Acknowledgements}

The main result of this paper was discovered by ChatGPT 5.6 Sol Pro (OpenAI).

The author has verified the arguments and bears sole responsibility for the correctness of all statements, proofs, and conclusions.

\end{document}